\documentclass{article}               
\NeedsTeXFormat{LaTeX2e}

\usepackage{float} 
\usepackage{graphicx}
\usepackage{enumerate}
\usepackage{amssymb, amsmath, amsthm}
\usepackage{dsfont}
\usepackage{textcomp}
\usepackage{listings}
\usepackage{color} 
\usepackage{hyperref}

\lstset{
	basicstyle=\sffamily\footnotesize, 
	keywordstyle=\sffamily\footnotesize\bfseries, 
	identifierstyle=, 
	commentstyle=\sffamily\footnotesize\slshape, 
	stringstyle=\sffamily\footnotesize, 
	showstringspaces=false, 
	numbers=left,
	numberstyle=\sffamily\footnotesize,
	stepnumber=1,
	numbersep=10pt,
	showspaces=false,
	showtabs=false,
	frame=lines,
	morecomment=[l]{\%},
	float=htbp,
	numberbychapter=true
}

\bibliographystyle{alpha}

\newtheorem{proposition}{Proposition}[section] 
\newtheorem{defn}[proposition]{Definition}
\newtheorem{assumption}[proposition]{Assumption}
\newtheorem{thm}[proposition]{Theorem}
\newtheorem{lemma}[proposition]{Lemma}
\newtheorem{corollary}[proposition]{Corollary}
\newtheorem{remark}[proposition]{Remark}

\usepackage[T1]{fontenc}
\usepackage{lmodern}


\newcommand{\N}{\ensuremath{{\mathbb N}}}

\newcommand{\R}{\ensuremath{{\mathbb R}}}

\newcommand{\hta}{\ensuremath{{\hat \tau}}}

\newcommand{\E}{\ensuremath{{\mathbb E}}}
\newcommand{\h}{\ensuremath{{ h}}}
\newcommand{\Pro}{\ensuremath{{\,\mathbb P}}}

\usepackage{mathtools}
\DeclarePairedDelimiter{\ceil}{\lceil}{\rceil}
\def\SpecialChap#1{{\let\cleardoublepage\relax\chapter{#1}}}

\begin{document}
	\title{General Optimal Stopping with Linear Costs}
\author{S\"oren Christensen\thanks{Christian-Albrechts-Universit\"at zu Kiel, Department of Mathematics, Ludewig-Meyn-Str. 4,
24118 Kiel, Germany} \and Tobias Sohr\footnotemark[1]}
\maketitle

{\small \noindent\textbf{Abstract:} }This article treats both discrete time and continuous time stopping problems for general Markov processes on the real line with general linear costs. Using an auxiliary function of maximum representation type, conditions are given to guarantee the optimal stopping time to be of threshold type. The optimal threshold is then characterized as the root of that function. For random walks our results condense in the fact that all combinations of concave increasing pay-off functions and convex cost functions lead to a one-sided solution. For L\'evy processes an explicit way to obtain the auxiliary function and the threshold  is given by use of the ladder height processes. Lastly, the connection from discrete and continuous problem and possible approximation of the latter one via the former one is discussed.

{\small \textbf{Keywords:}  Optimal Stopping; Discrete time; Continuous Time; Maximum representation; Threshold Times; Monotone Stopping Rules.}

{\small \textbf{Subject Classifications:}60G40; 62L15.}

\section{INTRODUCTION}

Optimal stopping problems with running costs of observation arise frequently in sequential decision making, see \cite{MR2083932} for an overview of many examples ranging from sequential statistics to finance. However, general solution techniques are only known for underlying diffusion processes (\cite{MR2083932,cisse2012optimal}) or certain subclasses of problems (\cite{MR1286327,MR1626790}). The main aim of this article is to close this gap. 

The main tool for this is the well-known notion of monotone stopping problems that was developed back in the early days of sequential decision making, see e.g. \cite{ChowRobins61}, and used extensively throughout the following decades. More recently, the power of this easy line of argument for more advanced problems was rediscovered, see \cite{MR3620898,CI19}, or \cite{ChristensenSohr} in the context of impulse control problems.
One idea in these work was to embed monotone problems in a priori non-monotone stopping settings. We adapt this technique to tackle (undiscounted) problems with generalized linear costs and characterize under which conditions the problems have a threshold time as an optimizer. 
The second part of this work tackles the continuous time analogous to the discrete time problem. The right tool to carry over the idea to utilize monotonicity turns out to be a maximum representation of the pay-off function. The root of a function occurring in said maximum representation yields the optimal threshold. In the case of L\'evy processes, we show how to find the maximum representation and hence optimal threshold (semi-)explicitly by use of the ladder height process. Because the generalized linear cost term may be tackled using a time change, these problems for L\'evy processes provide a versatile tool for applications. 

\section{DISCRETE STOPPING PROBLEM}\label{dicretesection}

Let $\left(Y_n\right)_{n\in \N}$ be a discrete time Markov chain on the real line. 
We aim to study the stopping problem for \begin{align}\gamma \left(Y_k\right)- \sum_{i=1}^{k}\  h\left(Y_i\right),\ \ k \in \N_0,\label{auxilliarystopping} \end{align} where $\gamma$ is a non-decreasing function and $h$ a non-negative non-decreasing function. \\ Namely we want to find the value function 
$$V\left(y\right):=\sup_{\tau \in \mathcal T}\E_y\left(\gamma\left(Y_\tau\right)- \sum_{i=1}^{\tau}\  \h\left(Y_i\right)\right)$$
where $\mathcal T$ is the set of all stopping times and we want to characterize when the stopping region 
$$S^*:= \{y\in \R| V\left(y\right)=\gamma\left(y\right)\} $$
is an interval of the form $\left[x,\infty\right)$ or $\left(x,\infty\right)$ for some $x \in \R$ and hence the stopping time
$$\tau^*:=\inf\{n|Y_n\in S^*\}$$ is a threshold time. 
To ensure that $\tau^*$ is an optimizer for $V$ and to exclude cases where waiting infinitely long would be optimal, we make one assumption that is pretty standard in optimal stopping. 
\begin{assumption}\label{opt}
	\begin{align*}
	\E_y\left(\sup_n\gamma\left(Y_n\right)- \sum_{i=1}^{n}\  \h\left(Y_i\right)\right) < \infty \ \forall y\in \R \\ \gamma\left(Y_n\right)-\sum_{i=1}^{n}\  \h\left(Y_i\right) \rightarrow -\infty \ \ a.s. 
	\end{align*}
\end{assumption}

\begin{lemma}
	The stopping time $\tau^*$ is the a.s. smallest optimal stopping time. 
\end{lemma}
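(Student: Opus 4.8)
The plan is to recognize the problem as a standard optimal stopping problem for the gain sequence $G_n := \gamma(Y_n) - \sum_{i=1}^n h(Y_i)$, so that $V(y) = \sup_{\tau \in \mathcal{T}} \E_y(G_\tau)$, and then to read off the conclusion from the general theory of the Snell envelope. The first step is to observe that the two parts of Assumption~\ref{opt} are exactly the hypotheses under which this theory applies in infinite horizon: the integrability bound $\E_y(\sup_n G_n) < \infty$ makes the essential supremum defining the Snell envelope finite and permits optional sampling, while $G_n \to -\infty$ a.s.\ guarantees that never stopping yields payoff $-\infty$, so that attention may be restricted to a.s.\ finite stopping times and an optimizer exists.

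Next I would introduce the value process (Snell envelope) $Z_n := \esssup_{\tau \ge n} \E_y(G_\tau \mid \mathcal{F}_n)$, the smallest supermartingale dominating $(G_n)$. Using the Markov property of $(Y_n)$ together with the additive structure of the cost term (which depends on the path only through the current values $Y_i$), the dynamic programming principle yields the Markovian representation $Z_n = V(Y_n) - \sum_{i=1}^n h(Y_i)$. Consequently $Z_n = G_n$ holds precisely when $V(Y_n) = \gamma(Y_n)$, i.e.\ exactly on $\{Y_n \in S^*\}$, and therefore $\tau^* = \inf\{n : Y_n \in S^*\} = \inf\{n : Z_n = G_n\}$: the first entry time of $(Y_n)$ into the stopping region coincides with the first time the Snell envelope meets the gain.

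It then remains to show that $\tau^*$ is optimal and smallest. Optimality and finiteness follow from the general theorem once the assumption is in force: $(Z_{n \wedge \tau^*})$ is a martingale, $Z_0 = V(y)$, and $Z_{\tau^*} = G_{\tau^*}$ with $\tau^* < \infty$ a.s., whence $\E_y(G_{\tau^*}) = V(y)$. For minimality, suppose $\sigma$ is any optimal stopping time and $\Pro_y(\sigma < \tau^*) > 0$. On $\{\sigma < \tau^*\}$ one has $Y_\sigma \notin S^*$, hence $Z_\sigma > G_\sigma$ strictly, while $Z_\sigma \ge G_\sigma$ always. Splitting the expectation and using optional sampling for the supermartingale $(Z_n)$ gives $\E_y(G_\sigma) \le \E_y(Z_\sigma) - \E_y\big((Z_\sigma - G_\sigma)\ind_{\{\sigma < \tau^*\}}\big) < \E_y(Z_\sigma) \le Z_0 = V(y)$, contradicting $\E_y(G_\sigma) = V(y)$. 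Hence $\sigma \ge \tau^*$ a.s., so $\tau^*$ is the a.s.\ smallest optimal stopping time.

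The step I expect to be the main obstacle is the rigorous justification of the Markovian representation $Z_n = V(Y_n) - \sum_{i=1}^n h(Y_i)$ and the accompanying optional-sampling identities in the infinite-horizon setting, where uniform integrability is not automatic; in particular I must verify that the strict inequality $Z_\sigma > G_\sigma$ off $S^*$ survives integration and that $\tau^* < \infty$ a.s. Both points hinge delicately on the two clauses of Assumption~\ref{opt}, so I would take care to invoke them precisely where finiteness and integrability are required.
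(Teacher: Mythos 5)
Your proposal is correct and is essentially the paper's own approach: the paper proves this lemma by citing the standard Snell-envelope theory (Shiryaev, Ch.~2, Thm.~8), and your argument is precisely that theory spelled out, with the running cost absorbed into the gain sequence $G_n$ and Assumption~\ref{opt} supplying the integrability and $G_n \to -\infty$ conditions under which the first entry time into $\{Z_n = G_n\}$ is the a.s.\ smallest optimal stopping time.
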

\begin{proof}
	This is a standard result in optimal stopping. In \cite{shiryayev}, Chapter 2, Theorem 8 a similar result without running costs can be found. 
\end{proof}

A main ingredient in our line of argument will be the function $f$ defined by
\begin{align*}
f\left(y\right)=\frac{\phi\left(y\right)-\gamma\left(y\right)}{\E_y\left({\tau^+}\right)}
, \label{bedingunggleichung} \tag{$*$}
\end{align*}
for all $y \in \R$, where $\tau^+:=\inf\{t\geq 0|Y_t >Y_0 \}$ for all $y \in \R$ and $$
\phi\left(y\right):= \E_y\left(\gamma\left(Y_{\tau^+}\right)-\sum_{i=1}^{\tau^+}\  \h\left(Y_i\right)\right).
$$ for all $y \in \R$. In the following we will always assume that $f$ is a well defined real valued function. All the degenerate cases, where one or more of the occurring terms are infinite can be treated by simple arguments or dismissed as unrealistic. 
Heuristically $f$ being positive means that the gain one would get by waiting for the process to rise above the present level exceeds the possible pay-off at the present level. 
\begin{assumption}\label{bedingung}
	\begin{itemize}
		
		\item $f$ is a well defined function $\R \rightarrow \R$,
		\item there is exactly one $ \overline{x}\in \R$ such that  $f\left(x\right) > 0 $ for all  $x \in \left(-\infty, \overline x\right)$, and $f\left(x\right)< 0$ for all $ x\in \left(\overline x, \infty\right)$,
		\item  on $\left[\overline{x},\infty\right)$, the function $f$ is non-increasing.
	\end{itemize}
\end{assumption} 
\emph{From now on we will always assume \ref{opt} and \ref{bedingung} to be true.}\\ 
Under this assumption we are able to show that the first entrance time into $\left(\overline x,\infty\right)$ or $\left[\overline x, \infty\right)$ is the optimizer for  $V\left(y\right)$ for all $y \in  \R$. Which type of interval is the right choice depends on the value $f\left(\overline x\right)$. In the following, we assume $f\left(\overline x\right) > 0$ and show that the first entry time in $\left(\overline x,\infty\right)$ is optimal. With the same line of argument, one can show that the first entry time into $\left[\overline x, \infty\right)$ is optimal  if $f\left(\overline x\right) \leq 0$. \\

\begin{remark}
	While the numerator has an obvious interpretation, the necessity of dividing by the expected waiting time for the process to exceed the present level is not entirely obvious (although, especially if we compare the discrete time case with the continuous time case, it forges links to the generator of the ladder height process occurring therein). And, indeed, if the function $f$ as in Assumption \ref{bedingung} fails to be monotone after the root $\bar x$, one can instead work with a function $$\tilde f(y):=\frac{\phi\left(y\right)-\gamma\left(y\right)}{\E_y\left(\sum_{i=1}^{\tau^+}g\left(Y_i\right)\right)}=f(y)\frac{\E_y(\tau^+)}{\E_y\left(\sum_{i=1}^{\tau^+}g\left(Y_i\right)\right)}$$ for some positive function $g$. All the later proofs work with such a $\tilde f$ that fulfils Assumption \ref{bedingung} as well, nevertheless, for the sake of brevity and clarity, we will just use the 'standard' $f$. 
\end{remark}

For all $x \in \R$ we write $\tau_x:=\inf\{n \in \N|Y_n >x \}.$
The first step towards the proof  of the optimality of $\tau_{\overline{x}}$ is to show that $\tau_{\overline{x}}$ is optimal in the class of threshold times.  
\begin{lemma}\label{discreteintegraldst} Let $\sigma_0:=0$ and for all $n>0$ let $\sigma_n:=\inf\{k \in \N|X_k>X_{\sigma_n-1} \}$ be the $n$-th ladder time. Then  for all $x,y\in \R$ with $x \leq y$ we have $$ \E_x\left(\gamma(Y_{\tau_y})-\sum_{i=1}^{\tau_y}\h(Y_i)\right)=\gamma(x)+\E_{x}\left( \sum_{\sigma_n \leq \tau_y}f(Y_{\sigma_n})\E_{Y_{\sigma_n}}\left(\tau^+\right) \right)$$
\end{lemma}
\begin{proof}
	Expand $\E_x\left(\gamma(Y_{\tau_y})\right)-\gamma(x)$ in a telescoping series. 
\end{proof}
\begin{corollary}\label{thresholdcorrolary}
	The stopping time $\tau_{\overline x}$ is the optimizer for the original stopping problem amongst all threshold times. 
\end{corollary}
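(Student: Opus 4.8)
The goal is to show that within the family of threshold times $\{\tau_y : y \in \R\}$, the choice $y = \overline{x}$ maximises the reward from every starting point $x$. The plan is to turn Lemma \ref{discreteintegraldst} into a pathwise comparison of candidate thresholds. Abbreviate $G(x,y) := \E_x(\gamma(Y_{\tau_y}) - \sum_{i=1}^{\tau_y} \h(Y_i))$; by Lemma \ref{discreteintegraldst}, for $y \geq x$,
\[
G(x,y) = \gamma(x) + \E_x\Big( \sum_{\sigma_n \leq \tau_y} f(Y_{\sigma_n})\, \E_{Y_{\sigma_n}}(\tau^+) \Big).
\]
It then suffices to prove $G(x,\overline{x}) \geq G(x,y)$ for all $x,y \in \R$.

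The first observation is that each summand carries the sign of $f(Y_{\sigma_n})$: indeed $\tau^+ \geq 1$, so the factor $\E_{Y_{\sigma_n}}(\tau^+)$ is strictly positive. By Assumption \ref{bedingung} together with the standing hypothesis $f(\overline{x}) > 0$, we have $f(v) > 0$ for all $v \leq \overline{x}$ and $f(v) < 0$ for all $v > \overline{x}$; the only property of $f$ this argument needs is this single sign change (the monotonicity on $[\overline{x},\infty)$ is not required here). Moreover the ladder heights $Y_{\sigma_0} = x < Y_{\sigma_1} < Y_{\sigma_2} < \cdots$ are strictly increasing, and $\tau_y$ is itself a ladder time whenever $y \geq x$.

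I would then compare the two sums path by path. If $y > \overline{x}$, then $\tau_y \geq \tau_{\overline{x}}$ and the sum for $G(x,y)$ contains every summand of the sum for $G(x,\overline{x})$ plus the terms attached to the ladder heights visited strictly after $\tau_{\overline{x}}$; since all of these already exceed $\overline{x}$, each extra term is negative and $G(x,y) \leq G(x,\overline{x})$. If $y < \overline{x}$, then $\tau_y \leq \tau_{\overline{x}}$ and it is the sum for $G(x,\overline{x})$ that carries the additional terms, now attached to ladder heights lying below $\overline{x}$ and hence positive, so again $G(x,y) \leq G(x,\overline{x})$ (the degenerate sub-case $y<x$, where $\tau_y = 0$ and the reward is simply $\gamma(x)$, is covered, the $\overline{x}$-sum merely adding non-negative terms). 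Finally, when $x > \overline{x}$ one has $\tau_{\overline{x}} = 0$, immediate stopping yields $\gamma(x)$, and for any larger threshold the additional ladder terms sit at heights $> \overline{x}$ and are negative. Taking expectations of these pathwise inequalities gives $G(x,\overline{x}) \geq G(x,y)$ for all $y$, which is the assertion.

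The main obstacle is not the sign bookkeeping but the infinite-horizon integrability that must accompany it: one has to guarantee that the series and expectations above are well defined, that the event $\{\tau_y = \infty\}$ contributes consistently (on which the reward degenerates to $-\infty$ by the second part of Assumption \ref{opt}), and that the pathwise inequalities survive the passage to expectations. This is precisely the role of Assumption \ref{opt}, whose bound on $\E_x(\sup_n (\gamma(Y_n) - \sum_{i=1}^n \h(Y_i)))$ and whose drift of the reward to $-\infty$ I would invoke to justify the convergence in the ladder decomposition and to discard the degenerate paths.
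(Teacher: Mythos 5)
Your proposal is correct and takes essentially the same route as the paper's own (much terser) proof: both apply Lemma \ref{discreteintegraldst} and then observe that, by the sign structure of $f$ from Assumption \ref{bedingung} (with the standing hypothesis $f(\overline{x})>0$), the threshold $\overline{x}$ collects exactly the positive summands $f(Y_{\sigma_n})\E_{Y_{\sigma_n}}(\tau^+)$ and excludes the negative ones. Your version simply makes explicit the nested comparison of index sets across thresholds and the integrability caveats that the paper leaves implicit.
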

\begin{proof}
	Lemma \ref{discreteintegraldst} yields that in order to maximize $\E_x\left(\gamma(Y_{\tau_y})-\sum_{i=1}^{\tau_y}\h(Y_i)\right)$ in $y$, one has to sum as many positive summands of the form $f(Y_{\sigma_n})\E_{Y_{\sigma_n}}\left(\tau^+\right) $ on the right hand side as possible. Assumption \ref{bedingung} ensures that $\tau_{ \bar x}$ indeed yields the maximum. 
\end{proof}
\begin{thm}\label{discretetheorem}
	Let $y \in \R$. Then $$\tau_{\overline{x}}=\inf\{n \geq 0|Y_n > \overline{x} \}$$ is optimal for $V\left(y\right)$ for all $y \in \R$. 
	
\end{thm}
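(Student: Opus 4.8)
The plan is to pin down the stopping region exactly, namely to prove $S^{*}=(\overline{x},\infty)$. Once this is established, the smallest optimal stopping time supplied by the first Lemma becomes $\tau^{*}=\inf\{n:Y_{n}\in S^{*}\}=\inf\{n:Y_{n}>\overline{x}\}=\tau_{\overline{x}}$, and that Lemma then hands us optimality of $\tau_{\overline{x}}$ for $V(y)$ at every $y\in\R$. Writing $Z_{n}:=\gamma(Y_{n})-\sum_{i=1}^{n}\h(Y_{i})$, it remains to prove two inclusions, of which one is routine. The easy inclusion $S^{*}\subseteq(\overline{x},\infty)$ follows since for $y\le\overline{x}$ Assumption \ref{bedingung} (together with $f(\overline{x})>0$ in the present case) gives $f(y)>0$, hence $\phi(y)>\gamma(y)$; as the single excursion up to $\tau^{+}$ is admissible, $V(y)\ge\phi(y)>\gamma(y)$, so $y\notin S^{*}$.

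The hard inclusion is $(\overline{x},\infty)\subseteq S^{*}$, i.e. $V(y)\le\gamma(y)$ for $y>\overline{x}$: no stopping rule started above $\overline{x}$ may beat stopping at once. The key point forcing the argument through the ladder epochs $\sigma_{n}$ rather than through single steps is that the one--step reward process $(Z_{n})$ is genuinely \emph{not} a supermartingale, since below $\overline{x}$ its one--step drift can be positive. First I would reduce to ladder times pathwise: for an arbitrary stopping time $\tau$ let $\sigma_{K}\le\tau$ denote the last ladder epoch not exceeding $\tau$. On $\sigma_{K}\le n\le\tau$ the chain sets no new record, so $Y_{n}\le Y_{\sigma_{K}}$; as $\gamma$ is non-decreasing and $\h\ge0$ this yields $Z_{\tau}\le Z_{\sigma_{K}}$. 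Hence it suffices to bound rewards of rules acting only at ladder epochs.

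On the ladder chain the increments have a clear sign: by the strong Markov property at $\sigma_{n}$ and the definition of $\phi$, $\E(Z_{\sigma_{n+1}}-Z_{\sigma_{n}}\mid\mathcal F_{\sigma_{n}})=\phi(Y_{\sigma_{n}})-\gamma(Y_{\sigma_{n}})=f(Y_{\sigma_{n}})\,\E_{Y_{\sigma_{n}}}(\tau^{+})$. Since the ladder heights $Y_{\sigma_{n}}$ are strictly increasing and $Y_{\sigma_{0}}=y>\overline{x}$, Assumption \ref{bedingung} forces $f(Y_{\sigma_{n}})<0$ for all $n$, so $(Z_{\sigma_{n}})_{n}$ is a supermartingale under $\Pro_{y}$. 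This is exactly the monotone structure underlying the embedding idea: once a ladder height lies above $\overline{x}$ it stays there, the per--excursion gain never turns favourable again, and the myopic rule — stop at the first ladder epoch, i.e. at time $0$ — is optimal. Optional stopping then gives $\E_{y}(Z_{\sigma_{K}})\le Z_{\sigma_{0}}=\gamma(y)$, which with $Z_{\tau}\le Z_{\sigma_{K}}$ and the trivial bound $V(y)\ge\gamma(y)$ yields $V(y)=\gamma(y)$.

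I expect the main obstacle to be making this last optional--stopping step fully rigorous, because the number of ladder epochs may be infinite and the index $K$ of the last ladder epoch before $\tau$ is not adapted in the naive sense. This is precisely where Assumption \ref{opt} is needed: $\E_{y}(\sup_{n}Z_{n})<\infty$ and $Z_{n}\to-\infty$ a.s. furnish the uniform integrability and truncation required to justify $\E_{y}(Z_{\sigma_{K}})\le\gamma(y)$ for the supermartingale $(Z_{\sigma_{n}})_{n}$ and to exclude any benefit from waiting indefinitely. The boundary behaviour is settled by the sign of $f(\overline{x})$: with $f(\overline{x})>0$ the point $\overline{x}$ remains a continuation point, the stopping set is the open half--line $(\overline{x},\infty)$, and $\tau^{*}$ is the first entrance into it, matching $\tau_{\overline{x}}=\inf\{n\ge0:Y_{n}>\overline{x}\}$; the case $f(\overline{x})\le0$ would analogously produce the closed half--line.
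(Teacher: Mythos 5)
Your overall strategy (pin down $S^*=(\overline x,\infty)$ and invoke the lemma on $\tau^*$), the easy inclusion, the pathwise reduction $Z_\tau\leq Z_{\sigma_K}$, and the supermartingale property of $(Z_{\sigma_n})_n$ along ladder epochs are all correct. The fatal step is the last one: ``optional stopping then gives $\E_y(Z_{\sigma_K})\leq Z_{\sigma_0}=\gamma(y)$''. This inequality is false in general, and the obstruction is not uniform integrability (so Assumption \ref{opt} cannot repair it) but adaptedness: $\{K\geq n+1\}=\{\sigma_{n+1}\leq\tau\}$ is $\mathcal F_{\sigma_{n+1}}$-measurable but in general not $\mathcal F_{\sigma_n}$-measurable, so $K$ is not a stopping time for the ladder filtration $(\mathcal F_{\sigma_n})_n$, and the increments of $(Z_{\sigma_n})_n$ get selected with knowledge of the future. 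Concrete counterexample: let $Y$ be a simple random walk with up-probability $p=0.6$, $\gamma(x)=x$, $h\equiv c=1/2$. Then $\E_y(\tau^+)=1/(2p-1)=5$ and $f\equiv(1-c\,\E(\tau^+))/\E(\tau^+)=-0.3<0$, so $(Z_{\sigma_n})_n$ is a supermartingale --- exactly the regime of your hard inclusion (to literally meet Assumption \ref{bedingung}, make $\gamma$ steeply increasing far below $y$ to create a finite root $\overline x$; nothing below changes, since the paths involved never go there). Take $\tau:=\inf\{m\geq 1:\ Y_m<\max_{k\leq m}Y_k\}$, the first step off the running maximum. Then $K$ is the number of initial consecutive up-steps, $\Pro_y(K=k)=p^k(1-p)$, and $Z_{\sigma_K}=y+(1-c)K$, so $\E_y(Z_{\sigma_K})=y+(1-c)\frac{p}{1-p}=y+\frac34>\gamma(y)$, even though Assumption \ref{opt} and every integrability condition hold. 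Note that the theorem itself survives in this example because $\E_y(Z_\tau)=y-\frac34<\gamma(y)$: the costs paid and payoff lost between $\sigma_K$ and $\tau$, which your pathwise bound $Z_\tau\leq Z_{\sigma_K}$ discards, are exactly what makes continuation unprofitable. (Nearby fixes also fail: $K+1$ \emph{is} a ladder-filtration stopping time, so $\E_y(Z_{\sigma_{K+1}})\leq\gamma(y)$ does hold, but there is no pathwise comparison between $Z_\tau$ and $Z_{\sigma_{K+1}}$, again because of the costs accrued after $\tau$.)

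A telltale sign of the gap is that your argument never uses the third bullet of Assumption \ref{bedingung}, the monotonicity of $f$ on $[\overline x,\infty)$, whereas the paper's proof needs it. The paper escapes the adaptedness trap by never comparing against an arbitrary stopping time: it verifies that the threshold value function $\tilde V$ is a majorant of $\gamma$ which is $h$-excessive, $\tilde V(y)\geq\E_y(\tilde V(Y_1)-h(Y_1))$, and then appeals to the standard verification theorem. In that computation the only indicators appearing are $1_{\{\kappa_n<\tau_y\}}$ with $\tau_y$ a first-passage (record) time; such events \emph{are} $\mathcal F_{\kappa_n}$-measurable, so conditioning on $\mathcal F_{\kappa_n}$ is legitimate, and every record $Y_{\kappa_n}$ visited before $\tau_y$ lies in $(\overline x,y]$, where the monotonicity of $f$ gives $-f(Y_{\kappa_n})\leq -f(y)$ and lets the weighted sum telescope to exactly $\gamma(y)-\phi(y)=\tilde V(y)-\phi(y)$. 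If you want to keep your direct-domination route, you must bound $\E_y(Z_\tau)$ itself, retaining the post-record cost terms, rather than passing through $\E_y(Z_{\sigma_K})$.
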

\begin{proof}
	As  Corollary \ref{thresholdcorrolary} indicates, the value function when only threshold times are admissible is given by $$
	\tilde V\left(y\right)=\begin{cases}
	\gamma\left(y\right) \ ; \ y > \overline{x} \\ \E_y\left(\gamma\left(Y_{\tau_{\bar x}}\right)-\sum_{i=1}^{\tau_{\bar x}}\  \h\left(Y_i\right)\right) \ ; \ y \leq \overline{x} 
	\end{cases}. $$
	Let $y \in \R$. We have $$\tilde{V}\left(y\right)\geq \gamma \left(y\right).$$ Hence, it remains to show $\h $-exessivity, meaning $$
	\tilde{V}\left(y\right)\geq \E_y\left(\tilde V\left(Y_1\right)-\h\left(Y_1\right)\right).
	$$   Define $$\tau':= \inf\{n>0|Y_n > \overline{x} \}.$$ By the strong Markov property we have \[
	\E_y\left(\tilde V\left(Y_1\right)-\h\left(Y_1\right)\right)=E_y\left(\gamma\left(Y_{\tau'}\right)-\sum_{i=1}^{\tau'}\  \h\left(Y_i\right)\right).
	\label{markov2} \tag{$**$} 
	\] 
	First look at the case $y \leq \overline{x}$: Here we have \\
	$$
	\tilde{V}\left(y\right)=\E_y\left(g\left(Y_{\tau'}\right)-\sum_{i=1}^{\tau'}\  \h\left(Y_i\right)\right) .
	$$  Further (\ref{markov2}) yields $$
	\E_y\left(\tilde V\left(Y_1\right)-\h\left(X_1\right)\right)=\E_y\left(\gamma\left(Y_{\tau'}\right)-\sum_{i=1}^{\tau'}\  \h\left(Y_i\right)\right)=\tilde V\left(y\right).
	$$ 
	Now assume $y > \overline{x}$.
	We define $\kappa_0:= 
	0$, $\kappa_1:=\inf\{n>0|Y_n>\overline{x}\}$ and for each $n >1$ we set $\kappa_n:= \inf\{n>\kappa_{n-1}|Y_n>Y_{\kappa_{n-1}}\}$. Again we use the notation $\tau^+:=\inf\{t\geq 0|Y_t>Y_0\}$. 
	Since this expression will occur later, we notice that \begin{align*}
	\sum_{n=0}^\infty  &\E_y\left(\E_{Y_{\kappa_n}}\left(\sum_{i=1}^{\tau^+}\  \h\left(Y_i\right)\right)1_{\{\kappa_n < \tau_y \}}\right)\\ &=\sum_{n=0}^\infty \E_y\left(\E_{y}\left(\sum_{i=\kappa_n+1}^{\kappa_{n+1}}\  \h\left(Y_i\right)|\mathcal A_{\kappa_n}\right)1_{\{\kappa_n < \tau_y \}}\right)
	\\&=\sum_{n=0}^\infty \E_y\left(\E_{y}\left(\left(\sum_{i=\kappa_n+1}^{\kappa_{n+1}}\  \h\left(Y_i\right)\right)1_{\{\kappa_n < \tau_y \}}|\mathcal A_{\kappa_n}\right)\right)
	\\&=\sum_{n=0}^\infty \E_y\left(\left(\sum_{i=\kappa_n+1}^{\kappa_{n+1}}\  \h\left(Y_i\right)\right)1_{\{\kappa_n < \tau_y \}}\right) 
	\\& = \E_y\left(\sum_{i=1}^{\tau_y}\  \h\left(Y_i\right)\right). \label{nebenrechnung}\tag{$*$}
	\end{align*}
	We obtain
	\begin{align*}
	\E_y&\left(\tilde V\left(Y_1\right)-\h\left(Y_1\right)\right)-\phi\left(y\right)=\E_y\left(\gamma\left(Y_{\tau'}\right)-\sum_{i=1}^{\tau'}\  \h\left(Y_i\right)\right)-\phi\left(y\right)\\&=\E_y\left(\gamma\left(Y_{\tau'}\right)-\gamma\left(Y_{\tau_y}\right)+\sum_{i=\tau'+1}^{\tau_y}\  \h\left(Y_i\right)\right)\\&=\sum_{n=0}^{\infty}\E_y\left(\left(\gamma\left(Y_{\kappa_n}\right)-\gamma\left(Y_{\kappa_{n+1}}\right)+\sum_{i=\kappa_n+1}^{\kappa_{n+1}}\  \h\left(Y_i\right)\right){1}_{\kappa_n < \tau_y}\right)\\&=\sum_{n=0}^{\infty}\E_y\left(\left(\gamma\left(Y_{\kappa_n}\right)-\left(\gamma\left(Y_{\kappa_{n+1}}\right)-\sum_{i=\kappa_n+1}^{\kappa_{n+1}}\  \h\left(Y_i\right)\right)\right) {1}_{\kappa_n < \tau_y}\right)
	\\&=\sum_{n=0}^{\infty}\E_y\left(\left(\gamma\left(Y_{\kappa_n}\right)-\E_y\left(\left(\gamma\left(Y_{\kappa_{n+1}}\right)-\sum_{i=\kappa_n+1}^{\kappa_{n+1}}\  \h\left(Y_i\right)\right)|\mathcal A_{\kappa_n}\right)\right) {1}_{\kappa_n < \tau_y}\right)
	\\&\stackrel{st. mkv}{=}\sum_{n=0}^{\infty}\E_y\left(\left(\gamma\left(Y_{\kappa_n}\right)-\E_{Y_{\kappa_n}}\left(\gamma\left(Y_{\tau^+}\right)-\sum_{i=1}^{\tau^+}\  \h\left(Y_i\right)\right) \right) {1}_{\kappa_n < \tau_y}\right)
	\\&=\sum_{n=0}^{\infty}\E_y\left(\left(\gamma\left(Y_{\kappa_n}\right)-\phi\left(Y_{\kappa_n}\right)\right) {1}_{\kappa_n<\tau_y}\right)
	\\& = \sum_{n=0}^\infty\E_y \left(\left(-f\left(Y_{\kappa_n}\right){\E_{Y_{\kappa_n}}\left(\sum_{i=1}^{\tau^+}\h\left(Y_i\right)\right)}\right)
	{1}_{\{\kappa_n < \tau_y\}}\right)
	\\& \stackrel{f \searrow}{\leq }
	\sum_{n=0}^\infty\E_y \left(\left(-f\left(y\right){\E_{Y_{\kappa_n}}\left(\sum_{i=1}^{\tau^+}\h\left(Y_i\right)\right)}\right)
	{1}_{\{\kappa_n < \tau_y\}}\right) 
	\\& =\sum_{n=0}^\infty\E_y\left( \left(\left(\gamma\left(y\right)-\phi\left(y\right)\right)\frac{\E_{Y_{\kappa_n}}\left(\sum_{i=1}^{\tau^+}\h\left(Y_i\right)\right)}{\E_y\left(\sum_{i=1}^{\tau_y}\h\left(Y_i\right)\right)}
	\right) {1}_{\{\kappa_n < \tau_y\}}\right)
	\\&\stackrel{\left(\bigstar\right)}{=}\gamma\left(y\right)-\phi\left(y\right)
	\\&=\tilde{V}\left(y\right)-\phi\left(y\right)
	\end{align*}
\end{proof}
\begin{remark}
	Since we are aiming for the one sided case, it of course seems natural to use the ascending ladder times to construct $f$. We consider it worth mentioning that when one instead uses 'skew' ladder times of the form $\tilde\tau^+:=\inf\{n\in\N|l(Y_n)>l(y) \}$ for some suitable (possibly non-monotonic) function $l$, the same proofs as beyond  still work and can lead to a characterization of the stopping sets by roots of an analogue function $f$ even in more complicated cases. However, the applicability of these more general result in concrete examples requires to 'guess' the right function $l$, which we only managed to do in trivial cases or situations which can be reduced to the one sided case anyway. So we decided to stick to the one sided situation in the proofs. 
\end{remark}
\subsection{Special Case: Random Walk}\label{random walk}
A nice application of our theory is the stopping problem for a random walk with linear costs.  
Assume $\left(X_i\right)_{i \in \N}$ is a sequence of independent, identically distributed random variables with $\Pro\left(X_1> 0\right)>0$, for all $n \in  \N_0$ let $$S_n:=\sum\limits_{i=1}^n X_i, $$ and $$Y_n^y:=y+S_n.$$ Denote with $\Pro_y$ the distribution of $Y^y$, set $\Pro:=\Pro_0$ and $\E:= \E_0$. 
Then using the notation of the previous sections, we have \begin{align*}
\phi\left(y\right)&=\E_y\left(\gamma\left(Y_{\tau_y}\right)-\sum_{i=1}^{\tau_y}\  \h\left(Y_i^y\right)\right)
\\&=\E\left(\gamma\left(y+S_{\tau^+}\right)-\sum_{i=1}^{\tau^+}\  \h\left(y+S_i\right)\right).
\end{align*}
Further $$\gamma\left(y\right)-\phi\left(y\right)=\E \left( \gamma\left(y\right)-\gamma\left(y+S_{\tau_+}\right)\right)-\E\left(\sum_{i=1}^{\tau_+}\  \h\left(y+Y_i\right)\right)$$ for all $y \in \R$.
Now $ y\mapsto \gamma\left(y\right)-\gamma\left(y+S_{\tau_+}\right)$ is increasing if $$y\mapsto \gamma\left(y\right) -\gamma\left(y+s\right) $$ is increasing for all $s>0$ or, equivalently, if $\gamma$ is convex. 
With the same argument we get that 
$$
y\mapsto -\E\left(\sum_{i=1}^{\tau_+}\  \h\left(y+Y_i\right)\right)
$$
is decreasing if $\h$ is non-decreasing. Hence, in the random walk case our findings read as follows:

\begin{thm}
	Assume that $Y$ is an random walk,
	define $$\bar x:=\inf\left\{y\in \R\ \Big| \E\left( \gamma\left(z\right)-\gamma\left(z+S_{\tau_+}\right)\right)\leq -\E\left(\sum_{i=1}^{\tau_+}\  \h\left(y+Y_i\right)\right) \right\}.$$ Further assume \ref{opt},  that $\gamma$ is concave on $\left[\bar x,\infty\right)$, and that $\h$ in non-decreasing and non-negative.   
	\begin{itemize}
		\item If $f(\bar x)\leq 0$, then $$\tau^*=\inf\{n\geq 0|Y_n\geq \bar x\}$$ is an optimal stopping time.
		\item If $f(\bar x) >0$, then $$\tau^*=\inf\{n\geq 0|Y_n> \bar x\}$$ is an optimal stopping time.  
	\end{itemize}
\end{thm}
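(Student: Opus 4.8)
The statement is an instance of Theorem \ref{discretetheorem} (and, for $f(\bar x)\le 0$, of the $[\bar x,\infty)$-analogue announced after Assumption \ref{bedingung}): once Assumption \ref{bedingung} is known to hold for the random walk under the stated hypotheses, and once the $\bar x$ of the statement is identified with the root of $f$, the optimality of $\tau^*$ is immediate. The whole plan is therefore to verify the three bullets of Assumption \ref{bedingung} in the random walk setting, using spatial homogeneity to dispose of the denominator of $f$ and the hypotheses on $\gamma$ and $\h$ to control its numerator.

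First I would exploit the decisive simplification that for a random walk $\tau^+=\inf\{n\mid Y_n^y>y\}=\inf\{n\mid S_n>0\}$ does not depend on the initial level $y$. Hence $\E_y(\tau^+)=\E(\tau^+)$ is one and the same constant for every $y$; by the standing assumption that $f$ is well defined, together with $\Pro(X_1>0)>0$, it lies in $(0,\infty)$. Consequently
\[
f(y)=\frac{\phi(y)-\gamma(y)}{\E(\tau^+)}=-\frac{\gamma(y)-\phi(y)}{\E(\tau^+)},
\]
so the sign of $f$ is just the sign of $\phi(y)-\gamma(y)$ and, more importantly, the monotonicity of $f$ is the reversed monotonicity of the numerator $\gamma-\phi$. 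This is what makes the random walk case transparent: the expected ladder time, which in the general theorem must be carried through Assumption \ref{bedingung}, here factors out as a positive constant and can be ignored.

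I would then read off the numerator from the decomposition obtained just before the statement, i.e. from the two contributions $\E(\gamma(y)-\gamma(y+S_{\tau^+}))$ and $\E(\sum_{i=1}^{\tau^+}\h(y+S_i))$. Since $S_{\tau^+}>0$ a.s., for $y\ge\bar x$ both $y$ and $y+S_{\tau^+}$ lie in $[\bar x,\infty)$, where $\gamma$ is concave; concavity makes the increment $\gamma(y+s)-\gamma(y)$ non-increasing in $y$ for each fixed $s>0$, so $y\mapsto\gamma(y)-\gamma(y+S_{\tau^+})$ is non-decreasing on $[\bar x,\infty)$ pathwise, hence in expectation. Because $\h$ is non-decreasing and neither $\tau^+$ nor the $S_i$ depend on $y$, the accumulated-cost functional is non-decreasing in $y$ as well. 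Combining the two, $\gamma-\phi$ is non-decreasing on $[\bar x,\infty)$, equivalently $f$ is non-increasing there, which is the third bullet of Assumption \ref{bedingung}. The level $\bar x$ in the statement is precisely the crossing level of this monotone numerator, i.e. the smallest $y$ at which $\gamma(y)-\phi(y)$ turns non-negative ($f$ turns non-positive), so it is the root of $f$; the value $f(\bar x)$ then dictates whether $\bar x$ joins the stopping region, giving $\tau^*=\inf\{n\ge 0\mid Y_n\ge\bar x\}$ when $f(\bar x)\le 0$ and $\tau^*=\inf\{n\ge 0\mid Y_n>\bar x\}$ when $f(\bar x)>0$, after which Theorem \ref{discretetheorem} (resp. its analogue) applies under Assumption \ref{opt}.

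The step I expect to be delicate is the second bullet of Assumption \ref{bedingung}, the existence of exactly one $\bar x$ with $f>0$ strictly to its left and $f<0$ strictly to its right, because concavity is assumed only on $[\bar x,\infty)$ and nothing is imposed on $\gamma-\phi$ below $\bar x$. The left half is for free: since $\bar x$ is an infimum, every $y<\bar x$ fails the defining inequality, so $f(y)>0$ there automatically, with no monotonicity needed. For the right half and for genuine uniqueness one must rule out $\gamma-\phi$ sitting at $0$ on a whole interval past $\bar x$; the non-decreasing property gives $\gamma-\phi\ge 0$ there, and a mild strictness (strict concavity of $\gamma$, or $S_{\tau^+}$ non-degenerate together with $\h$ strictly increasing) upgrades this to $\gamma-\phi>0$, i.e. $f<0$, for $y>\bar x$. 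The only remaining bookkeeping is the boundary value $f(\bar x)=0$ and checking that the $f(\bar x)\le 0$ variant of Theorem \ref{discretetheorem} transfers verbatim.
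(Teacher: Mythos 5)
Your proposal is correct and takes essentially the same route as the paper: the paper's own argument (the computation preceding the theorem) likewise uses spatial homogeneity of the random walk to make $\E_y(\tau^+)$ a positive constant, reduces everything to the numerator $\gamma-\phi$, derives its monotonicity on $[\bar x,\infty)$ from concavity of $\gamma$ and monotonicity of $\h$, and then invokes Theorem \ref{discretetheorem} and its $[\bar x,\infty)$-analogue. If anything you are more careful than the paper, since you get the signs in the decomposition of $\gamma-\phi$ right (the paper's display has a sign typo) and you explicitly flag that strict negativity of $f$ beyond $\bar x$, as demanded by Assumption \ref{bedingung}, requires a mild non-degeneracy condition that the paper passes over in silence.
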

\begin{corollary}
	Assume that $Y$ is a random walk. Further assume \ref{opt}, that $\gamma$ is concave on $\left[\bar x,\infty\right)$, and that $\h$ is constant, i.e. $\h(x)=c$ for some $c>0$. Define $$\bar x:=\inf\{y\in \R|\E \left( \gamma\left(z\right)-\gamma\left(z+S_{\tau_+}\right)\right)\leq -c \E\left({\tau_+}\right) \}.$$ Then we get:
	\begin{itemize}
		\item If $f(\bar x)\leq 0$, then $$\tau^*=\inf\{n\geq 0|Y_n\geq \bar x\}$$ is an optimal stopping time.
		\item If $f(\bar x) >0$, then $$\tau^*=\inf\{n\geq 0|Y_n> \bar x\}$$ is an optimal stopping time.  
	\end{itemize}
\end{corollary}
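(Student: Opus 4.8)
The plan is to read off the corollary as a direct specialization of the preceding theorem, the only substantive step being to simplify the cost term that defines the threshold. First I would verify that the hypotheses of the theorem hold for the constant cost: with $\h(x) = c$ and $c > 0$ the function $\h$ is non-negative and, being constant, non-decreasing, so the cost assumptions are satisfied; the concavity of $\gamma$ on $[\bar x, \infty)$ and Assumption \ref{opt} are assumed verbatim, and the $\gamma$-part of the threshold condition is left untouched.

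The key observation is that for a constant cost the running-cost sum decouples from the position of the walk. On every path one has $\sum_{i=1}^{\tau_+} \h(y + Y_i) = \sum_{i=1}^{\tau_+} c = c\,\tau_+$, independently of $y$, and therefore $\E\left(\sum_{i=1}^{\tau_+} \h(y + Y_i)\right) = c\,\E(\tau_+)$. Inserting this identity into the theorem's defining inequality for $\bar x$ turns the right-hand side $-\E(\sum_{i=1}^{\tau_+} \h(y + Y_i))$ into $-c\,\E(\tau_+)$, which is exactly the quantity appearing in the corollary. Hence the two prescriptions for $\bar x$ coincide, and with them the root of $f$ that the theorem identifies with it.

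With the hypotheses checked and the thresholds matched, the theorem applies without change and delivers the stated dichotomy: $\tau^* = \inf\{n \geq 0 \mid Y_n \geq \bar x\}$ is optimal when $f(\bar x) \leq 0$, while $\tau^* = \inf\{n \geq 0 \mid Y_n > \bar x\}$ is optimal when $f(\bar x) > 0$. The only place the argument could falter is the well-definedness of $f$, i.e. the finiteness of $\E(\tau_+)$; note that for a random walk $\E_y(\tau^+)$ is translation invariant and equals $\E(\tau_+)$, so it is a positive constant governing neither the sign nor the monotonicity of $f$. This finiteness is guaranteed by the standing Assumption \ref{bedingung} and, for a random walk, amounts to the familiar positive-drift-type condition that excludes the degenerate case. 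I expect this to be the main, and essentially only, subtlety, the remainder being a routine substitution.
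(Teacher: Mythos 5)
Your proposal is correct and matches the paper's (implicit) reasoning: the paper states this corollary without proof, treating it as the immediate specialization of the preceding theorem obtained by noting that for $\h \equiv c$ the cost sum collapses to $c\,\tau_+$, so that $\E\left(\sum_{i=1}^{\tau_+}\h\left(y+Y_i\right)\right)=c\,\E\left(\tau_+\right)$ and the two threshold definitions coincide. Your additional remark on the finiteness of $\E\left(\tau_+\right)$ being part of the standing Assumption \ref{bedingung} is a fair observation and does not change the argument.
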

 Here we want to point out the connection to \cite{MR1286327} 
who study traditional parking problems for random walks without running costs. If $h$ is constant, Wald's identity can be used to transform the problem to a stopping problem with no running costs. So these results can be viewed as a generalization of the results in the mentioned article. Also the line of argument here is inspired by the considerations there, but we avoid the use of the Wiener-Hopf factorization.

\section{CONTINUOUS PROBLEM}

This section translates the ideas from the discrete time case to the continuous time case. Since there are many technical difficulties when it comes to the definition of the ladder times in continuous time, it is by no means straightforward to define an analogous function $f$. To elaborate the similarities of both cases we hence shortly review the key properties of $f$: Lemma \ref{discreteintegraldst}  yields that for $x,y\in \R$ we have \begin{align*}
\E_x\left(\gamma(Y_{\tau_y})-\sum_{i=1}^{\tau_y}\h(Y_i)\right)=\gamma(x)+\E_{x}\left( \sum_{\sigma_n \leq \tau_y}f(Y_{\sigma_n})\E_{Y_{\sigma_n}}\left(\tau^+\right) \right).
\end{align*} The right hand side herein can be viewed as the discrete time analogue to an expected integral of $Y$'s maximum process plugged into $f$. These integral representation together with the monotonicity properties of $f$ postulated in Assumption \ref{bedingung} first ensured that $\tau_{ \bar x}$ was the optimizer amongst all threshold times and later on also helped to ensure maximality in the initial problem. Now our course of action is as follows: We first simply assume that there is a function $f$ as in Assumption \ref{bedingung} that enables an integral type maximum representation similar to the one above, then we are able to follow the same line of argument as in the discrete time case. After that we first discuss existence of such a function and also in the case of L\'evy processes give a construction (analogous to the discrete time case with help of the ladder height process). We follow the line of argument used in \cite{ChristensenSohr} for a related problem arising in impulse control.

\subsection{Notation and Prerequisites}\label{notationen}

Let $X$ be a strong Markov process on $\R$ with a  right continuous filtration $\mathcal F:=\left(\mathcal F_t\right)_{t\geq 0}$ such that $\mathcal F_0$ is complete. 
Further we work with the usual associated family of measures $\Pro_x\left(\cdot\right):=\Pro\left(\cdot|X_0=x\right)$ and for each $x \in \R$ let $\E_x$ be the expectation operator associated to $\Pro_x$. 
Let $\gamma$ and $h$ be real functions and assume:
\begin{assumption}\label{bedingungstetig}
	\begin{enumerate}
		\item $\gamma$ is non-decreasing and differentiable. 
		\item\label{hassumption} $h$ is non-negative, continuous and for all $x,y \in \R$ with $x<y$, we have $$\E_x\left(\int_0^{\tau_y}h\left(X_s\right)\ ds\right)< \infty.$$ 
	\end{enumerate}
\end{assumption}
Define the set $\mathcal T$ as the set of all stopping times $\tau$ with $\E_x\left(\tau \right)<\infty$ and $ \E_x\left(\int_0^\tau h\left(X_s\right)~ds\right)~<~\infty$ for all $x \in \R$.
We look at the stopping problem
$$
\mathcal V(x):=\sup_{\tau \in \mathcal T}\E_x\left(\gamma\left(X_\tau\right)-\int_{0}^{\tau}h\left(X_t\right)  \ dt \right) 
$$ for all $x \in \R$. With similar steps as in the discrete time case, we develop sufficient conditions under that a threshold time, whose threshold is given as the root of a function $f$, is optimal.
Since in our setting there occurs no killing and/or discounting, an issue is the lack of resolvents that are often used to obtain maximum representations, see e.g. \cite{mordeckisalminen} \cite{novikovshiryaevstopping07}, \cite{surya07} and \cite{CST13}. Instead, we first fix $\bar y \in \R$ and assume a maximum representation analogous to the one in the discrete time case, and justify that it suffices to only work with stopping times bounded by first entry times to intervals of the form $\left(-\infty, \bar y\right]$ if $\bar y $ is large enough. 
\\
\begin{assumption}\label{maxdst} We assume that there is a function $f$ such that: \begin{enumerate}
		
		\item For all $x  \leq {\bar y}$ $$
		\gamma\left(x\right)= - \E_{x}\left[\int_{0}^{\tau_{\bar y}}f\left(\sup_{ r\leq t} X_r\right)dt\right]+\E_{x}\left[\gamma\left(X_{\tau_{{\bar y}}}\right)-\int_{{ 0}}^{\tau_{\bar y}} h\left(X_s\right) ds\right].$$
		\item The function $f$ has a unique root $\bar x \in \R$ and is  strictly decreasing on $[\bar x,\infty)$. 
	\end{enumerate}
\end{assumption}

\begin{remark}
	
	Whenever the running maximum occurs, we tacitly assume $\Pro_x$ to be $\Pro_{\left(x,x\right)}$, the measure corresponding to the two dimensional Markov process $\left(X_t,\sup_{ r\leq t }X_r\right)_{t\geq 0}$ started in $\left(x,x\right)$. So we are still able to exploit the Markovian structure. 
\end{remark}


\begin{defn}
	Call a stopping time $\tau$ upper regular if there is a value $\bar y \in \R$ such that $\tau$ is under all $\Pro_y$ a.s. bounded by the first entry time of $X$ in $[\bar y,\infty)$. Define $\mathcal U:=\{\tau \in \mathcal T|\tau \text{ is upper regular}\}$. 
\end{defn}
\begin{lemma}\label{regularlemma}
	For all $x,y\in \R$ with $x \geq  y$ we have $$\mathcal V (x)=\sup_{\tau \in \mathcal U}\E_x\left(\gamma\left(X_\tau\right)-\int_0^\tau h\left(X_s\right)  \ ds\right)$$
\end{lemma}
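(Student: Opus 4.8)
The auxiliary parameter $y$ in the statement plays no role, and the argument is carried out for an arbitrary fixed starting point $x$. One inequality is free: since $\mathcal U\subseteq\mathcal T$ we immediately have $\mathcal V(x)\ge\sup_{\tau\in\mathcal U}\E_x(\gamma(X_\tau)-\int_0^\tau h(X_s)\,ds)$. The whole content is therefore the reverse inequality, and the plan is to approximate an arbitrary $\tau\in\mathcal T$ from within $\mathcal U$. Writing $R(\sigma):=\gamma(X_\sigma)-\int_0^\sigma h(X_s)\,ds$, the natural approximants are the truncations $\tau\wedge\tau_{\bar y}$: each is bounded by the first entry time into $[\bar y,\infty)$, hence upper regular, and each lies in $\mathcal T$ because $\tau\wedge\tau_{\bar y}\le\tau$ and $h\ge0$. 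It thus suffices to show $\E_x(R(\tau\wedge\tau_{\bar y}))\to\E_x(R(\tau))$ as $\bar y\to\infty$, since this forces $\sup_{\tau\in\mathcal U}\E_x(R(\tau))\ge\E_x(R(\tau))$ for every $\tau\in\mathcal T$.

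The almost sure convergence is the easy part. Because $\tau\in\mathcal T$ satisfies $\E_x(\tau)<\infty$, we have $\tau<\infty$ and the running maximum $\sup_{s\le\tau}X_s$ finite almost surely; consequently, for every $\bar y$ exceeding that path-dependent maximum one has $\tau_{\bar y}>\tau$ and hence $\tau\wedge\tau_{\bar y}=\tau$, so $R(\tau\wedge\tau_{\bar y})\to R(\tau)$ pointwise. I would then split $R$ into its cost and reward parts. For the cost, $\int_0^{\tau\wedge\tau_{\bar y}}h(X_s)\,ds$ increases monotonically to $\int_0^\tau h(X_s)\,ds$, whose expectation is finite by $\tau\in\mathcal T$ (finiteness along the way being guaranteed by Assumption \ref{bedingungstetig}(\ref{hassumption})), so monotone convergence applies. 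The remaining task is to justify $\E_x(\gamma(X_{\tau\wedge\tau_{\bar y}}))\to\E_x(\gamma(X_\tau))$.

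This reward term is the main obstacle, and it is a genuine integrability issue rather than a structural one. For processes with jumps $X_{\tau_{\bar y}}$ overshoots the level $\bar y$, so on $\{\tau>\tau_{\bar y}\}$ the value $\gamma(X_{\tau\wedge\tau_{\bar y}})$ is not merely $\gamma(\bar y)$ and must be controlled uniformly in $\bar y$. The correct device is a dominating majorant supplied by the standing finiteness requirements: the continuous-time analogue of Assumption \ref{opt}, namely integrability of $\sup_{t}\big(\gamma(X_t)-\int_0^t h(X_s)\,ds\big)$, bounds the family $\{R(\tau\wedge\tau_{\bar y})\}_{\bar y}$ from above by an integrable random variable, while monotonicity of $\gamma$ together with the already-controlled cost bounds it from below; a dominated (or Vitali) convergence argument then yields $\E_x(R(\tau\wedge\tau_{\bar y}))\to\E_x(R(\tau))$. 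I note that one does not need the maximum representation of Assumption \ref{maxdst} or the sign of $f$ here—those enter only in the subsequent optimality proof; for the present reduction the equality follows purely from the approximation $\tau\wedge\tau_{\bar y}\uparrow\tau$ and uniform integrability.
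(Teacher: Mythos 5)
Your reduction and truncation scheme coincide with the paper's: the paper takes an $\epsilon$-optimal $\tau\in\mathcal T$ and sets $\sigma_n:=\tau\wedge\inf\{t\geq 0\mid X_t\geq n+X_0\}$, which is exactly your $\tau\wedge\tau_{\bar y}$ with $\bar y=n+x$ under $\Pro_x$, and it handles the cost term just as you do (convergence with the integrable majorant $\int_0^\tau h\left(X_s\right)ds$). The gap is in your treatment of the reward term. You invoke ``the continuous-time analogue of Assumption \ref{opt}'', i.e.\ integrability of $\sup_t\bigl(\gamma(X_t)-\int_0^t h(X_s)\,ds\bigr)$, as a dominating majorant for a dominated/Vitali convergence argument. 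No such assumption is made in the continuous-time part of the paper: Assumption \ref{opt} is imposed only in the discrete-time Section \ref{dicretesection}, while Section 3 works under Assumptions \ref{bedingungstetig} and \ref{maxdst} alone. So this step rests on a hypothesis that is not available, and as a proof of the lemma as stated it is incomplete.

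The missing observation is that the full convergence $\E_x\left(R\left(\tau\wedge\tau_{\bar y}\right)\right)\to\E_x\left(R\left(\tau\right)\right)$ is more than you need: since each truncation lies in $\mathcal U$, the one-sided bound $\E_x\left(R\left(\tau\right)\right)\leq\liminf_{\bar y\to\infty}\E_x\left(R\left(\tau\wedge\tau_{\bar y}\right)\right)$ already yields the reverse inequality. Moreover, your worry about the overshoot points in the wrong direction: $\gamma$ is non-decreasing, so on $\{\tau_{\bar y}<\tau\}$ the overshoot gives $\gamma\left(X_{\tau_{\bar y}}\right)\geq\gamma\left(\bar y\right)$, i.e.\ it \emph{increases} the approximant's reward, which is harmless for a lower bound (and control from above is automatic, since $\tau\wedge\tau_{\bar y}\in\mathcal T$ means its value cannot exceed $\mathcal V(x)$). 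The paper obtains the one-sided bound without any upper majorant by monotone convergence applied to the capped variables $\gamma\left(X_\tau\right)\wedge\gamma\left(X_{\sigma_n}\right)$: these are a.s.\ non-decreasing in $n$ (as long as $\sigma_n<\tau$ one has $X_{\sigma_n}\geq n+X_0$ and the stopped values increase with the level, while after $\sigma_n=\tau$ the cap equals $\gamma\left(X_\tau\right)$), whence
\begin{align*}
\E_x\left(\gamma\left(X_\tau\right)\right)=\lim_{n\rightarrow\infty}\E_x\left(\gamma\left(X_\tau\right)\wedge\gamma\left(X_{\sigma_n}\right)\right)\leq\liminf_{n\rightarrow\infty}\E_x\left(\gamma\left(X_{\sigma_n}\right)\right).
\end{align*}
Replacing your dominated-convergence step by this capping argument repairs your proof using only the paper's standing assumptions; everything else in your proposal matches the paper's proof.
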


\begin{proof}
	Take a $\tau \in \mathcal T$ that is $\epsilon$-optimal. Set for all $n \in \N$ \[\sigma_n:=\tau \wedge \inf\{t\geq 0 |X_t \geq n+X_0\}   \] 
	We have $\sigma_n\rightarrow \tau$ a.s. under all $\Pro_z$ and, since $ \int_0^\tau h\left(X_s\right)  \ ds$ works as an integrable majorant, we get with dominated convergence \[
	\E_x\left(\int_0^{\tau} h\left(X_s\right) \ ds\right)=\lim_{n\rightarrow \infty}\E_x\left(\int_0^{\sigma_n}h\left(X_s\right)  \  ds\right)
	\]
	lastly, because both $\gamma$ and $\left(\sigma_n\right)_{n\in \N}$ are (a.s. under all $\Pro_z$) monotone, monotone convergence yields \begin{align*}
	\E_x\left(\gamma\left(X_\tau\right)\right)&=\E_x\left(\lim_{n\rightarrow \infty}\gamma\left(X_\tau\right)\wedge \gamma\left(X_{\sigma_n}\right)\right)\\&= \lim_{n\rightarrow \infty}\E_x\left(\gamma\left(X_\tau\right)\wedge \gamma\left(X_{\sigma_n}\right)\right)\\&\leq\lim_{n\rightarrow \infty}\E_x\left( \gamma\left(X_{\sigma_n}\right)\right)
	\end{align*}
	Altogether this yields
	\[
	\E_x\left(\gamma\left(X_\tau\right)-\int_0^{\tau}h\left(X_s\right)  \ ds\right)\leq\lim_{n\rightarrow \infty}\E_x\left(\gamma\left(X_{\sigma_n}\right)-\int_0^{\sigma_n} h\left(X_{s} \right)\ ds\right).
	\]
\end{proof}

\begin{lemma}\label{39}
	Let $x,\bar y \in \R$ with $x\leq \bar y$. For $a_{\bar y}:=\bar y \wedge \bar x$ holds
	$$\sup_{  \tau \leq  \tau_{\bar y}}\E_{x}\left[\int_{0}^{\tau}f\left(\sup_{ r\leq t} X_r\right)dt\right]=\E_{x}\left[\int_{0}^{\tau_{a_{\bar y}}}f\left(\sup_{r \leq t} X_r\right)dt\right]$$ 
\end{lemma}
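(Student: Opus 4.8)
The plan is to prove the sharper pathwise statement that, for every stopping time $\tau\leq\tau_{\bar y}$,
$$\int_0^\tau f\left(\sup_{r\leq t}X_r\right)dt\ \leq\ \int_0^{\tau_{a_{\bar y}}}f\left(\sup_{r\leq t}X_r\right)dt\qquad\Pro_x\text{-a.s.},$$
from which the claimed identity follows at once: taking $\E_x$ and passing to the supremum gives ``$\leq$'', while the reverse inequality is trivial once we observe that $\tau_{a_{\bar y}}$ is itself admissible. Indeed, since $a_{\bar y}=\bar x\wedge\bar y\leq\bar y$ and $z\mapsto\tau_z$ is non-decreasing, we have $\tau_{a_{\bar y}}\leq\tau_{\bar y}$, so $\tau_{a_{\bar y}}$ lies in the feasible class and the supremum is attained by it.

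Throughout I would write $M_t:=\sup_{r\leq t}X_r$ for the running maximum, which under the convention $\Pro_x=\Pro_{(x,x)}$ is non-decreasing with $M_0=x$. The difference of the two integrals is
$$\int_0^\infty f\left(M_t\right)\left(\ind_{\{t<\tau\}}-\ind_{\{t<\tau_{a_{\bar y}}\}}\right)dt,$$
so it suffices to check that the integrand is $\leq0$ for Lebesgue-almost every $t$ on $\Pro_x$-almost every path. The factor $\ind_{\{t<\tau\}}-\ind_{\{t<\tau_{a_{\bar y}}\}}$ takes values in $\{-1,0,1\}$, and I would dispatch the three cases separately, invoking the sign behaviour of $f$ around its root, namely $f\geq0$ on $(-\infty,\bar x]$ and $f<0$ on $(\bar x,\infty)$ (the continuous-time counterpart of Assumption \ref{bedingung}, consistent with the strict decrease on $[\bar x,\infty)$ and uniqueness of the root $\bar x$ in Assumption \ref{maxdst}).

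When the indicator difference equals $-1$, i.e. $\tau\leq t<\tau_{a_{\bar y}}$, the process has not yet exceeded $a_{\bar y}$, so $M_t\leq a_{\bar y}\leq\bar x$ and hence $f(M_t)\geq0$, making the product $\leq0$. When the difference equals $+1$, i.e. $\tau_{a_{\bar y}}\leq t<\tau$, I would use the constraint $\tau\leq\tau_{\bar y}$: if $a_{\bar y}=\bar y<\bar x$ then $\tau_{a_{\bar y}}=\tau_{\bar y}\geq\tau$, so this event is empty; otherwise $a_{\bar y}=\bar x$, the process has exceeded $\bar x$ strictly before $t$ (up to a time-set of measure zero), whence $M_t>\bar x$ and $f(M_t)<0$, again yielding a product $\leq0$. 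The remaining case contributes nothing, and integrating in $t$ gives the pathwise bound.

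To legitimately take expectations I would note that on $[0,\tau_{\bar y})$ one has $M_t\in[x,\bar y]$, where $f$ is bounded, so both integrals are dominated in absolute value by $\left(\sup_{[x,\bar y]}|f|\right)\tau_{\bar y}$, which is $\Pro_x$-integrable by the finiteness conditions on the admissible stopping times; the interchange with the supremum is then routine. The only genuinely delicate point is the sign bookkeeping in the ``$+1$'' case, where the feasibility restriction $\tau\leq\tau_{\bar y}$ must be invoked precisely to exclude a wrong-sign contribution in the regime where the root $\bar x$ sits above the truncation level $\bar y$; everything else is measure-theoretic routine.
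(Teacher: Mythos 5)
Your proof is correct and is exactly the argument the paper intends: its one-sentence proof appeals precisely to the monotonicity of $\sup_{r\leq t}X_r$ and the root/monotonicity properties of $f$ from Assumption \ref{maxdst}, which your pathwise case analysis (integrand nonnegative before the maximum crosses $\bar x$, nonpositive after, hence stop at $\tau_{\bar x}\wedge\tau_{\bar y}=\tau_{a_{\bar y}}$) simply spells out in full. The only loose end is your claim that $f$ is bounded on $[x,\bar y]$ to justify taking expectations --- this is not explicitly assumed, but the finiteness of the relevant expectations is already implicit in Assumption \ref{maxdst}, so the interchange can be justified from there instead.
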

\begin{proof}
	This is a direct consequence of the properties of $f$ posed upon it in Assumption \ref{maxdst} and the monotonicity of $\sup_{r \leq t}X_r$. 
\end{proof}

\begin{thm}\label{stoppinghauptsatz}
	For all $x \in \R$ holds $$ \mathcal V(x)=\E_x\left(\gamma\left(X_{\tau_{\bar x}}\right)-\int_{0}^{\tau_{\bar x}}\left(h\left(X_s\right) \right)\ ds\right). $$
	
\end{thm}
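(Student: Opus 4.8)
The plan is to mirror the discrete-time argument (Theorem \ref{discretetheorem}) in continuous time, using the maximum representation of Assumption \ref{maxdst} in place of the telescoping ladder-time decomposition. The target is to show that the candidate value $\tilde{\mathcal V}(x):=\E_x(\gamma(X_{\tau_{\bar x}})-\int_0^{\tau_{\bar x}}h(X_s)\,ds)$ coincides with $\mathcal V(x)$. By Lemma \ref{regularlemma} it suffices to compare $\tilde{\mathcal V}$ against competitors in $\mathcal U$, and the heart of the matter is to prove that $\tau_{\bar x}$ is in fact optimal, which I would establish by showing $\tilde{\mathcal V}$ dominates every admissible reward and satisfies the appropriate excessivity property.

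First I would fix $x\leq\bar y$ and use part (1) of Assumption \ref{maxdst} to rewrite, for any threshold $\tau_y$ with $y\leq\bar y$, the reward $\E_x(\gamma(X_{\tau_y})-\int_0^{\tau_y}h\,ds)$ as $\gamma(x)+\E_x[\int_0^{\tau_y}f(\sup_{r\leq t}X_r)\,dt]$; this is the continuous analogue of Lemma \ref{discreteintegraldst}. Since $\sup_{r\leq t}X_r$ is non-decreasing in $t$ and, by part (2), $f$ is positive below $\bar x$ and negative above it, the integral is maximized precisely by stopping when the running maximum first exceeds $\bar x$. Lemma \ref{39}, with $a_{\bar y}=\bar y\wedge\bar x$, makes this rigorous: it identifies $\tau_{a_{\bar y}}$ as the maximizer of $\E_x[\int_0^\tau f(\sup_{r\leq t}X_r)\,dt]$ over $\tau\leq\tau_{\bar y}$. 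For $\bar y\geq\bar x$ this gives $a_{\bar y}=\bar x$, so $\tau_{\bar x}$ is optimal among all upper-regular stopping times bounded by $\tau_{\bar y}$, and taking $\bar y\to\infty$ together with Lemma \ref{regularlemma} promotes this to optimality over all of $\mathcal T$ for starting points $x\leq\bar x$.

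Next I would handle the starting points $x>\bar x$ (where immediate stopping should be optimal) and verify that the two regimes glue into a genuine value function. The clean way is to check that $\tilde{\mathcal V}$ is the smallest $h$-superharmonic majorant of $\gamma$: one shows $\tilde{\mathcal V}\geq\gamma$ everywhere, and that $t\mapsto\tilde{\mathcal V}(X_t)-\int_0^t h(X_s)\,ds$ is a supermartingale, reducing for $x>\bar x$ to the statement that the expected gain from any excursion above the current level is non-positive — which is exactly the content of $f<0$ on $(\bar x,\infty)$ fed through the representation, just as the final display chain in the proof of Theorem \ref{discretetheorem} used $f\searrow$ and $f(Y_{\kappa_n})<0$. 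Standard optimal-stopping verification (the reward is dominated by any superharmonic majorant, while $\tau_{\bar x}$ attains $\tilde{\mathcal V}$) then yields $\mathcal V=\tilde{\mathcal V}$.

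The main obstacle I anticipate is the passage $\bar y\to\infty$ and the attendant integrability bookkeeping: Assumption \ref{maxdst}(1) is stated only for a fixed $\bar y$ and for $x\leq\bar y$, so I must argue that the representation and the optimality it yields are stable as $\bar y$ grows, controlling $\E_x[\int_0^{\tau_{\bar y}}f(\sup_{r\leq t}X_r)\,dt]$ and ensuring the negative contributions past $\bar x$ do not create convergence issues. This is where Assumption \ref{opt}'s continuous analogue (the finiteness and $-\infty$ drift conditions built into $\mathcal T$ and Assumption \ref{bedingungstetig}) must be invoked to justify dominated/monotone convergence, mirroring the role of the majorant in Lemma \ref{regularlemma}. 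The excursion-supermartingale estimate for $x>\bar x$ is conceptually identical to the discrete chain of inequalities but requires care in continuous time, since one no longer has discrete ladder epochs $\kappa_n$ and must instead differentiate the representation or appeal directly to the monotonicity in Lemma \ref{39}.
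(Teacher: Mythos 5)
There is a genuine gap at the decisive step. You establish the identity
$\E_x\bigl[\gamma(X_{\tau_y}) - \int_0^{\tau_y} h(X_s)\,ds\bigr] = \gamma(x) + \E_x\bigl[\int_0^{\tau_y} f(\sup_{r\leq t} X_r)\,dt\bigr]$
only for \emph{threshold} times $\tau_y$, and then invoke Lemma \ref{39} --- which maximizes the auxiliary functional $\tau \mapsto \E_x\bigl[\int_0^\tau f(\sup_{r\leq t}X_r)\,dt\bigr]$ over \emph{all} stopping times $\tau \leq \tau_{\bar y}$ --- to conclude that $\tau_{\bar x}$ is optimal among all upper regular stopping times. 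That inference does not follow: optimality for the auxiliary functional transfers to the original problem only if you also know that the reward of an \emph{arbitrary} $\tau \leq \tau_{\bar y}$ equals (or is dominated by) $\gamma(x) + \E_x\bigl[\int_0^\tau f(\sup_{r\leq t}X_r)\,dt\bigr]$, and your proposal never establishes this for non-threshold times. As written, your second paragraph proves only the continuous analogue of Corollary \ref{thresholdcorrolary} (optimality within the class of threshold times), not the theorem.

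The missing idea --- which is the heart of the paper's proof --- is to apply Assumption \ref{maxdst} at the random point $X_\tau$ and use the strong Markov property (in the two-dimensional setting $(X_t, \sup_{r\leq t}X_r)$, cf.\ the paper's remark on $\Pro_{(x,x)}$): for upper regular $\tau \leq \tau_{\bar y}$,
\begin{align*}
\E_x\left[\gamma(X_\tau) - \int_0^\tau h(X_s)\,ds\right] ={}& -\E_x\left\{\E_x\left[\int_\tau^{\tau_{\bar y}} f\left(\sup_{r\leq t}X_r\right) dt \,\Big|\, \mathcal F_\tau\right]\right\}\\
&+ \E_x\left[\gamma\left(X_{\tau_{\bar y}}\right) - \int_0^{\tau_{\bar y}} h(X_s)\, ds\right],
\end{align*}
after which the representation at level $\bar y$ applied at $x$, then Lemma \ref{39}, then the representation at level $\bar x$ yield the bound $\E_x\bigl[\gamma(X_\tau)-\int_0^\tau h(X_s)\,ds\bigr] \leq \gamma(x)+\E_x\bigl[\int_0^{\tau_{\bar x}}f(\sup_{r\leq t}X_r)\,dt\bigr] = \E_x\bigl[\gamma(X_{\tau_{\bar x}}) - \int_0^{\tau_{\bar x}} h(X_s)\,ds\bigr]$, uniformly in $x$. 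Note also that with this argument no case distinction $x\lessgtr \bar x$ and no limit $\bar y \to \infty$ is needed: each upper regular competitor comes with its own finite $\bar y$, and Lemma \ref{regularlemma} already reduces $\mathcal V$ to such competitors. So both your separate supermartingale verification for $x > \bar x$ and the $\bar y\to\infty$ convergence bookkeeping you flag as the main obstacle are unnecessary detours; the real work sits precisely in the strong Markov/conditioning step you skipped, which your excessivity argument for $x>\bar x$ would in any case have to re-derive in order to be made rigorous.
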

\begin{proof}
	We define for all $x\in \R$ $$\tilde g\left(x\right):=\E_x\left(\gamma\left(X_{\tau_{\bar x}}\right)-\int_{0}^{\tau_{\bar x}}h\left(X_s\right) \ ds\right).$$ One immediately sees that $ \mathcal V \geq \tilde g \geq \gamma$.
	Let $x \in \R$. Lemma \ref{regularlemma} tells us that it suffices to show $$\tilde g\left(x\right)\geq \sup_{  \tau \in \mathcal U} \E_x\left(\gamma\left(X_\tau\right)-\int_0^{\tau} h\left(X_s\right)   \ ds\right)$$ in order to prove $\mathcal V=\tilde g$.
	\\
	Let $\tau\in \mathcal U$ be an upper regular stopping time and fix an ${\bar y}>\bar x, x$ such that $\tau\leq \tau_{\bar y}$ $\Pro_x$ a.s. Then we have
	
	\begin{align*}
	\E_x &\left[\gamma\left(X_\tau\right)-\int_0^{\tau}h\left(X_s\right) \ d s\right]
	\\\stackrel{\ref{maxdst}}{=}&\E_x\left\{-\E_{X_\tau}\left[\int_{0}^{\tau_{\bar y}}f\left(\sup_{ r\leq t}X_r\right)dt\right]\right.
	\\&\left.+\E_{X_\tau}\left[\gamma\left(X_{\tau_{{\bar y}}}\right)-\int_{{ 0}}^{\tau_{\bar y}} h\left(X_s\right) \ ds\right] -\left[\int_0^{\tau}h\left(X_s\right)\ d s\right]\right\}
	\\=&-\E_x\left\{\E_{X_\tau}\left[\int_{0}^{\tau_{\bar y}}f\left(\sup_{ r\leq t}X_r\right)dt\right]\right\}+\E_{x}\left[\gamma\left(X_{\tau_{{\bar y}}}\right)-\int_{0}^{\tau_{\bar y}} h\left(X_s\right)\ ds \right]
	\\=&-\E_x\left\{\E_{x}\left[\int_{\tau}^{\tau_{\bar y}}f\left(\sup_{ r\leq t}X_r\right)dt|\mathcal F_\tau \right]\right\}+\E_{x}\left[\gamma\left(X_{\tau_{{\bar y}}}\right)-\int_{0}^{\tau_{\bar y}} h\left(X_s\right)\ ds \right]
	\\\stackrel{\ref{maxdst}}{\leq}&-\E_{x}\left[\int_{\tau_{\bar x}}^{\tau_{\bar y}}f\left(\sup_{ r\leq t}X_r\right)dt\right]
	+\E_{x}\left[\gamma\left(X_{\tau_{{\bar y}}}\right)-\int_{0}^{\tau_{\bar y}} h\left(X_s\right)\ ds \right]
	\\\stackrel{\ref{maxdst}}{=}&-\E_{x}\left[\int_{\tau_{\bar x}}^{\tau_{\bar y}}f\left(\sup_{ r\leq t}X_r\right)dt\right] +\gamma\left(x\right)+\E_x\left[\int_{{ 0}}^{\tau_{\bar y}}f\left(\sup_{r\leq t}X_{r}\right) \ dt\right]
	\\=&\gamma\left(x\right)+\E_{x}\left[\int^{\tau_{\bar x}}_0  f\left(\sup_{ r\leq t}X_r\right)dt \right]
	\\\stackrel{\ref{maxdst}}{=} & \E_{x}\left[\gamma\left(X_{\tau_{ \bar x}}\right)-\int_0^{\tau_{\bar x}} h\left(X_s\right)  \ ds\right].
	\end{align*}
	
\end{proof}

\subsection{Discussion of Assumption \ref{maxdst}}

The key part of our solution is Assumption \ref{maxdst}, namely the existence of a function $f$ such that for all $x, \bar y \in \R$ with $x~\leq~{\bar y}$ we have \begin{align}
\gamma\left(x\right)=-\E_{x}\left[\int_{0}^{\tau_{\bar y}}f\left(\sup_{ r\leq t} X_r\right)dt\right]+\E_{x}\left[\gamma\left(X_{\tau_{{\bar y}}}\right)-\int_{{ 0}}^{\tau_{\bar y}} h\left(X_s\right) ds\right].\label{maxdst2}
\end{align}
If  $f$ has a unique root and is monotonous after, the stopping problem we solved in the previous section has a threshold time as an optimizer. This section aims to discuss existence of such a maximum representation.\\
First we give a brief overview on literature regarding maximum representations for general Markov processes, thereafter we focus on explicit obtainability in the case that the underlying process is a L\'evy process. This construction has been developed in \cite{ChristensenSohr} in a similar setting, therefore we will leave out the proofs if they are already given in said work. 
\subsubsection{General Remarks}
The comprehensive intuition for the use of maximum representations in optimal stopping arises from the fact that under quite general assumptions superharmonic functions can be characterized as expectations of non-negative functions of the maximum process, see \cite{follmerknispel} and the references therein for a proof from a potential theoretic perspective. That the value functions of a stopping problem is basically the smallest superharmonic majorant of the pay-off function, suggests the assumption that, when any maximum representation of the pay-off function in terms of a function $f$ can be found, a candidate for the smallest superharmonic majorant is the maximum representation in terms of $f^+$, the positive part of aforementioned  $f$. And indeed in the discounted case in \cite{CST13}, it is shown that, if the pay-off function has a kind of maximum representation and a suitable representing function $f$ is shaped nicely, indeed $f^+$ is  the representing function of the value function's maximum representation. In \cite{MR3620898} a suggestion can be found to get such a representation: If somehow, there is a terminal state $\zeta$ of $X$, such that neither $X_\zeta$ nor the running maximum $M_\zeta$ are degenerate, then one can first find a representation of $\gamma$ in terms of $X_\zeta$ (for example with Dynkin's formula or using resolvents) and later condition on $M_\zeta$. This can be used to show existence of a maximum representation. Nevertheless this approach is not applicable here and suffers from the lack of explicity. 
\subsubsection{L\'evy Processes}
Therefore next, we take some steps towards an explicit obtainability of a maximum representation under the assumption that $X$ is a L\'evy process with $\E(X_1)>0$. 
First, we will give sufficient conditions for an $f$ as in \eqref{maxdst2} to exist and thereafter take some steps to the (semi-)explicit obtainability in interesting cases. This theory was developed in \cite{ChristensenSohr} and the detailed proofs can be found therein. \\
We fix a $\bar y \in \R$ through the section: Further let $H$ denote the ascending ladder height process of $X$ and $H^\downarrow$ denote the descending ladder height process of $X$. As these corresponding ladder time processes are only defined up to a multiplicative constant, we choose them such that $\E(\tau_x)=\E(\inf\{t\geq 0|H_t\geq x\})$ and $H^\downarrow$ accordingly. \\

\begin{lemma}\label{dachfunktion}
	For each positive function $g$ define for all $y \geq 0$
	\begin{align*}
	\hat g\left(y\right) :=\E_y\left(\int\limits_0^\infty g\left( H^{\downarrow}_t\right) \ dt \right).
	\end{align*}
	Then for all $x\leq y$
	\begin{align*}
	\E_x \left(\int_0^{\tau_y} g\left(X_t\right) \ dt \right)=\E_x\left( \int_0^{\hat \tau_y}\hat g \left(H_t\right) \ dt\right).
	\end{align*}
\end{lemma}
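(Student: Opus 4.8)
The plan is to mirror the discrete-time computation, in particular the telescoping argument behind equation $(*)$ inside the proof of Theorem \ref{discretetheorem}, replacing the sum over ladder epochs by an excursion-theoretic integral in the local-time scale of the running maximum $\bar X_t:=\sup_{r\le t}X_r$. Two ingredients are needed: a fluctuation-theoretic (duality) identity re-expressing $\hat g$ as the expected $g$-occupation of $X$ below a fresh maximum before that maximum is first exceeded, and a ladder decomposition of the full occupation integral up to $\tau_y$. By spatial homogeneity of the L\'evy process $X$ it suffices to treat the identity for one fixed pair $x\le y$, which I keep general; all integrals occurring below are finite by Assumption \ref{bedingungstetig}\ref{hassumption}, which legitimises the Fubini and monotone-convergence steps.

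\textbf{First ingredient (duality).} I would first show
$$\hat g(a)=\E_a\left(\int_0^{\tau^+}g(X_t)\,dt\right),\qquad \tau^+:=\inf\{t\ge 0: X_t>a\}.$$
Writing $V^\downarrow(a,\cdot)$ for the potential (renewal) measure of the descending ladder height process started at $a$, the definition of $\hat g$ reads $\hat g(a)=\int g\,dV^\downarrow(a,\cdot)$. The classical fluctuation identity for L\'evy processes states that $V^\downarrow(a,\cdot)$ coincides with the Green measure $\E_a\!\left(\int_0^{\tau^+}\ind_{\{X_t\in\,\cdot\,\}}\,dt\right)$ of $X$ killed upon first passage strictly above its starting level; this is exactly the Wiener--Hopf-type duality between the process reflected at its maximum and the descending ladder height process. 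Here the chosen normalization $\E(\tau_x)=\E(\inf\{t: H_t\ge x\})$, together with the corresponding normalization of $H^\downarrow$, is precisely what removes any multiplicative constant: taking $g\equiv 1$ reduces the claim to $\E_a(\tau^+)=\E_a(\text{lifetime of }H^\downarrow)$, which is the consistency built into the normalization.

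\textbf{Second ingredient (ladder decomposition).} Let $(T_s,H_s)_{s\ge 0}$ be the bivariate ascending ladder process, so that $H$ records the running maximum in the local-time scale $s$ and $T_s$ is the associated real time; since $\E(X_1)>0$ this subordinator is proper. With $\hat\tau_y=\inf\{s: H_s\ge y\}$ one has $T_{\hat\tau_y}=\tau_y$, hence
$$\E_x\left(\int_0^{\tau_y}g(X_t)\,dt\right)=\E_x\left(\int_0^{T_{\hat\tau_y}}g(X_t)\,dt\right).$$
I would then split the real-time interval $[0,T_{\hat\tau_y})$ according to the excursions of the reflected process $\bar X-X$ away from $0$: during the excursion attached to local time $s$ the running maximum is frozen at $H_s$, so the $g$-occupation contributed by that excursion is an occupation of $X$ strictly below level $H_s$. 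The compensation formula (master formula of excursion theory) turns the sum over excursions into an integral $\int_0^{\hat\tau_y}(\cdots)\,ds$ against the excursion measure, and by the strong Markov property at the ladder point the expected per-local-time contribution at maximum level $H_s$ equals $\E_{H_s}\!\left(\int_0^{\tau^+}g(X_t)\,dt\right)$. Combining this with the first ingredient, that contribution is exactly $\hat g(H_s)$, giving
$$\E_x\left(\int_0^{\tau_y}g(X_t)\,dt\right)=\E_x\left(\int_0^{\hat\tau_y}\hat g(H_t)\,dt\right),$$
which is the assertion.

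\textbf{Main obstacle.} The routine parts (translation invariance, interchange of expectation and integration, finiteness) are harmless. The genuine work is the second ingredient: making the excursion/compensation step precise with the correct normalization, in particular accounting for a possible drift component of the inverse local time (time spent at the maximum) and for creeping versus jumping across levels, so that no spurious constant survives. Equivalently, one must establish the spatial Wiener--Hopf factorization of the killed Green measure, $U_y(x,\cdot)=\int \tilde U_y(x,dz)\,V^\downarrow(z,\cdot)$, where $U_y$ and $\tilde U_y$ denote the occupation measures of $X$ and of $H$ killed at first passage above $y$. This duality-plus-factorization is the crux; as the text indicates, the detailed fluctuation-theoretic arguments are those of \cite{ChristensenSohr}.
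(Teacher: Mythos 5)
The paper itself does not prove this lemma --- it defers the proof to \cite{ChristensenSohr} --- so your proposal has to stand on its own, and it does not: both of your ingredients are stated in a form that is false in exactly the generic continuous-time case, namely when the starting point is regular for the upper half-line (which holds for every L\'evy process of unbounded variation, e.g.\ with a Gaussian component, and also for bounded variation processes with strictly positive drift). In that case $\tau^+=\inf\{t\ge 0:X_t>a\}=0$ $\Pro_a$-a.s., so $\E_a\left(\int_0^{\tau^+}g(X_t)\,dt\right)=0$, while $\hat g(a)=\E_a\left(\int_0^\infty g\left(H^{\downarrow}_t\right)dt\right)>0$ for positive $g$, because the descending ladder height process of a process with $\E(X_1)>0$ is killed at a positive rate and has strictly positive expected lifetime. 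So your first ingredient is not a classical identity but a false statement, and your own consistency check already fails: $\E_a(\tau^+)=0$ cannot equal the expected lifetime of $H^{\downarrow}$, and no choice of multiplicative normalization repairs an identity one side of which vanishes identically. (The paper's normalization equates $\E(\tau_x)$ with $\E(\hat\tau_x)$ for levels $x$ strictly above the starting point; it says nothing that makes $\E_a(\tau^+)$ positive.)

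The same degeneracy breaks the decisive step of your second ingredient: in the compensation formula the contribution of excursions per unit of local time at the maximum is not $\E_{H_s}\left(\int_0^{\tau^+}g(X_t)\,dt\right)$ --- that quantity is zero --- but the integral $n\left(\int_0^{\zeta}g\left(H_s-\epsilon_u\right)du\right)$ against the It\^o excursion measure $n$ of the reflected process $\bar X-X$ (an infinite measure, not a law of $X$), plus a term $\delta\,g(H_s)$ coming from the drift $\delta$ of the inverse local time, i.e.\ the real time spent at the maximum. The actual content of the lemma is the identity $\hat g(z)=\delta\,g(z)+n\left(\int_0^{\zeta}g\left(z-\epsilon_u\right)du\right)$, equivalently the spatial Wiener--Hopf factorization $\int_0^\infty \Pro\left(\bar X_t\in dm,\ \bar X_t-X_t\in dw\right)dt=U(dm)\,\hat U(dw)$ with $U,\hat U$ the ascending/descending renewal measures in the paper's normalization, against which one integrates $g(m-w)\ind_{\{m<y\}}$. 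You do name this factorization as the crux in your closing paragraph, but the route you propose toward it --- strong Markov property at ladder epochs plus $\Pro_a$-expectations up to $\tau^+$ --- cannot reach it, because those expectations degenerate. As written, your argument is valid only in the upward-irregular, essentially random-walk-like case, which is precisely the discrete-time situation the lemma is meant to generalize; a correct proof must either establish the factorization directly (as in the fluctuation-theory literature) or run the compensation formula with $n$ and $\delta$.
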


In the following the operator $A_H$ is defined as the generalized extended generator of $H$ and we assume $\gamma$ is in the range of $A_H$. This enables us to set
\begin{align}
f:=\left(A_H\gamma +\hat h\right)\label{fdef}
\end{align}
and an application of Dynkin's formula yields
\begin{align*} 
\E_x\left[\gamma\left(X_{\tau_y}\right)-\int_0^{\tau_y} h\left(X_s\right) \ ds\right]=\E_x\left[\int_0^{\hat \tau_y}f\left(H_s\right)ds\right]+\gamma\left(x\right)\label{777}
\end{align*} 	for all $x,y<\ \bar y\in \R$.

Now by algebraic induction we get
for all $x < y$ 
\begin{align*}
\E_x\left[\int_{0}^{\hat \tau_{y}}f\left(H_s\right) \ ds\right]{=}\E_x\left[\int_{0}^{\tau_{ y}}f\left(\sup_{r\leq t} X_r\right)\ ds \right],	\end{align*}
where $\hta_x$ denotes the first entry time of $H$ in $[x,\infty)$.
%


Putting all this together yields the desired result:
\begin{lemma}\label{maxdarstellungshauptlemma} For all $x  \leq {\bar y}$ holds $$
	\gamma\left(x\right)=-\E_{x}\left[\int_{0}^{\tau_{\bar y}}f\left(\sup_{ r\leq t} X_r\right)dt\right]+\E_{x}\left[\gamma\left(X_{\tau_{{\bar y}}}\right)-\int_{{ 0}}^{\tau_{\bar y}} h\left(X_s\right) ds\right].$$
\end{lemma}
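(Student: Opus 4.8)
The plan is to assemble the identity directly from the two representations already derived in this subsection, so that essentially no new computation is required. The definition $f:=(A_H\gamma+\hat h)$ in \eqref{fdef} together with Dynkin's formula gives, for $x\leq\bar y$,
\[
\E_x\left[\gamma\left(X_{\tau_{\bar y}}\right)-\int_0^{\tau_{\bar y}}h\left(X_s\right)\, ds\right]=\E_x\left[\int_0^{\hta_{\bar y}}f\left(H_s\right)\, ds\right]+\gamma\left(x\right),
\]
which is the continuous-time counterpart of the telescoping identity in Lemma \ref{discreteintegraldst}: the running cost and terminal pay-off accrued along $X$ up to $\tau_{\bar y}$ are re-expressed as a single integral of $f$ along the ascending ladder height process $H$, up to its first entrance time $\hta_{\bar y}$ into $[\bar y,\infty)$.

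Next I would invoke the running-maximum identity obtained by the algebraic induction above, namely
\[
\E_x\left[\int_0^{\hta_{\bar y}}f\left(H_s\right)\, ds\right]=\E_x\left[\int_0^{\tau_{\bar y}}f\left(\sup_{r\leq t}X_r\right) dt\right],
\]
whose content is that integrating $f$ along $H$ is the same as integrating $f(\sup_{r\leq t}X_r)$ against ordinary time up to $\tau_{\bar y}$; this is exactly the time change relating the ladder height process to the running maximum of $X$, and it is where Lemma \ref{dachfunktion} enters. Substituting this into the previous display and solving for $\gamma(x)$ yields
\[
\gamma\left(x\right)=-\E_x\left[\int_0^{\tau_{\bar y}}f\left(\sup_{r\leq t}X_r\right) dt\right]+\E_x\left[\gamma\left(X_{\tau_{\bar y}}\right)-\int_0^{\tau_{\bar y}}h\left(X_s\right)\, ds\right],
\]
which is the asserted maximum representation.

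The remaining care concerns well-definedness and the boundary. First, all expectations are finite: the cost term is controlled by Assumption \ref{bedingungstetig}(\ref{hassumption}), the standing hypothesis that $\gamma$ lies in the range of $A_H$ makes $f=A_H\gamma+\hat h$ a genuine real function, and Lemma \ref{dachfunktion} ensures $\hat h$ is finite on the relevant range. Second, the two ingredient displays were stated under the strict inequalities $x,y<\bar y$, whereas the lemma claims the closed range $x\leq\bar y$ with $y=\bar y$; I would reach the endpoints by letting $y\uparrow\bar y$ (and $x\uparrow\bar y$), transferring the identity by monotone convergence of the integrals together with the path regularity of $X$ and the right-continuity and monotonicity of $t\mapsto\sup_{r\leq t}X_r$.

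The main obstacle is not this final assembly but the two displays it rests upon — in particular the algebraic-induction identity, which requires justifying the interchange between integration along the ladder height process and integration of the running maximum in real time. Once those are granted, as they are here with detailed proofs referenced to \cite{ChristensenSohr}, the lemma follows immediately by the substitution above.
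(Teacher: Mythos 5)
Your proposal is correct and follows essentially the same route as the paper, whose proof of Lemma \ref{maxdarstellungshauptlemma} is precisely the assembly you describe: substitute the algebraic-induction identity $\E_x\left[\int_0^{\hta_{\bar y}}f\left(H_s\right)ds\right]=\E_x\left[\int_0^{\tau_{\bar y}}f\left(\sup_{r\leq t}X_r\right)dt\right]$ into the Dynkin-formula display for $f=A_H\gamma+\hat h$ and rearrange. Your additional care about finiteness and about passing to the boundary $y=\bar y$ (and the observation that the ladder-time identity, not Lemma \ref{dachfunktion} itself, is the time change doing the work in the second display) goes beyond what the paper records, but does not change the argument.
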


\section{CONNECTION OF THE PROBLEMS}

This similarity of both problems' structures, especially the similarity in the functions $f$ that determine the optimal threshold, suggests that the solution of the continuous problem can be found via discretization. Hence, this section aims to give conditions under that the solution of embedded discrete problems converges to the solution of the continuous problem. \\
Again let $X$ be a strong Markov process on $\R$ with a  right continuous filtration $\mathcal F:=\left(\mathcal F_t\right)_{t\geq 0}$ such that $\mathcal F_0$ is complete. As before we look at the continuous time stopping problem 
$$
\mathcal V(x)=\sup_{\tau \in \mathcal T}\E_x\left(\gamma\left(X_\tau\right)-\int_{0}^{\tau}h\left(X_t\right)  \ dt \right) 
$$ for all $x \in \R$. Further we still work with Assumption \ref{bedingungstetig}. 
\begin{defn}\label{discretizationdef}
	We call a sequence of ascending sequences of stopping times $\left((\tau^n_k)_{k \in \N}\right)_{n\in \N}$ a suitable discretization if \begin{itemize}
		\item $\{\tau^n_k|k \in \N\}\subseteq\{\tau^{n+1}_k|k \in \N\}$ for all $n\in \N$. 
		\item For each $n\in \N$ the process $Y^n$ defined by $Y^n_k:=X_{\tau_k^n}$ for all $k\in \N$ is a Markov process. 
		\item For each $X$-stopping time $\tau$, for each $n\in \N$ there is a $Y^n$-stopping time $\ceil \tau^n  $ such that for all $n \in \N$ we have  $\tau \leq \ceil \tau^{n+1} \leq \ceil \tau^{ n}$ and  a.s.$$\lim_{n\rightarrow \infty} \ceil \tau^n = \tau.$$
	\end{itemize}
\end{defn}
We say a suitable discretization  $\left((\tau^n_k)_{k \in \N}\right)_{n\in \N}$ harmonizes with $\gamma$, if for all $x \in \R$ and all $X$-stopping times $\tau$
$$ \lim_{n\rightarrow \infty} \E_x\left(\gamma(X_{\ceil \tau^ n})\right)=\E_x\left(\gamma(X_{\tau})\right).
$$
We say a suitable discretization  $\left((\tau^n_k)_{k \in \N}\right)_{n\in \N}$ harmonizes with $h$, if for each $n\in \N$ there is a function $\ceil h^n$ such that for all $x \in \R$ and all $X$-stopping times $\tau$
$$ \lim_{n\rightarrow \infty} \E_x\left(\sum_{i=1}^{\ceil \tau^n}\ceil h^n(Y^n_i)\right)=\E_x\left(\int_0^\tau h(X_s) ds \right).
$$
With these definitions at hand we immediately get the following result: 
\begin{thm}
	Assume $\left((\tau^n_k)_{k \in \N}\right)_{n\in \N}$ is a suitable discretization. 
	Using the definitions in \ref{discretizationdef}, define for all $n \in \N$ 
	$$\mathcal V_n(x):= \sup_{\tau} \E_x\left( \gamma(Y_\tau)-\sum_{i=1}^\tau \ceil{h}_n(Y_i) \right), $$ where the supremum is taken over all $Y_n$-stopping times. Assume each $\mathcal V_n$ has an optimal stopping time and let $\mathcal S_n:=\{x\in \R|\mathcal V_n(x)=\gamma(x)\}$ be the stopping set of $V_n$ and $\mathcal S:=\{x\in \R|\mathcal V(x)=\gamma(x)\}$ the one for $\mathcal V$. 
	Then point-wise $$ \mathcal V_n \nearrow \mathcal V
	$$ and $$\bigcap_{n\in \N} \mathcal S_n = \mathcal S.
	$$
\end{thm}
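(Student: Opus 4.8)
The plan is to deduce both assertions from the single fact that $\mathcal V_n \nearrow \mathcal V$ pointwise, which I would establish through the chain $\gamma \le \mathcal V_n \le \mathcal V_{n+1}\le \mathcal V$ together with $\liminf_n \mathcal V_n \ge \mathcal V$. The bound $\gamma \le \mathcal V_n$ is immediate, since the $Y^n$-stopping index $0$ is admissible and yields reward $\gamma(Y^n_0)=\gamma(x)$ at zero cost. The real content is the two-sided comparison of $\mathcal V_n$ with the continuous value and the passage to the limit.

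For the upper bound I would translate each $Y^n$-stopping index $\kappa$ into the genuine $X$-stopping time $\rho:=\tau^n_\kappa$, which is adapted to $X$, takes values in the grid $\{\tau^n_k\}_k$, and (for competitors with finite objective) lies in $\mathcal T$. The decisive point is that, for the natural one-step cost $\ceil{h}^n(y)=\E_y\!\left(\int_0^{\tau^n_1} h(X_s)\,ds\right)$ supplied by the harmonization with $h$, a telescoping/strong-Markov argument identical in spirit to the computation carried out in the proof of Theorem \ref{discretetheorem} gives the exact identity
\begin{align*}
\E_x\!\left(\sum_{i=1}^{\kappa}\ceil{h}^n(Y^n_i)\right)=\E_x\!\left(\int_0^{\rho} h(X_s)\,ds\right).
\end{align*}
Consequently the discrete objective at $\kappa$ coincides with the continuous objective at $\rho$, whence $\mathcal V_n(x)\le \mathcal V(x)$; and since the nesting $\{\tau^n_k\}_k\subseteq\{\tau^{n+1}_k\}_k$ makes every such $\rho$ a feasible grid time at level $n+1$ with the same value, I also obtain $\mathcal V_n\le \mathcal V_{n+1}$.

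For the lower bound I would fix $x$, pick an $\varepsilon$-optimal $\tau\in\mathcal T$ for $\mathcal V(x)$, and use its roundings $\ceil{\tau}^n$ — which are admissible $Y^n$-stopping times — as competitors in $\mathcal V_n(x)$, so that
\begin{align*}
\mathcal V_n(x)\ge \E_x\!\left(\gamma(X_{\ceil{\tau}^n})-\sum_{i=1}^{\ceil{\tau}^n}\ceil{h}^n(Y^n_i)\right).
\end{align*}
Letting $n\to\infty$, the harmonization with $\gamma$ controls the first term and the harmonization with $h$ the second, yielding $\liminf_n \mathcal V_n(x)\ge \E_x(\gamma(X_\tau)-\int_0^\tau h(X_s)\,ds)\ge \mathcal V(x)-\varepsilon$; letting $\varepsilon\downarrow0$ and combining with the previous paragraph gives $\mathcal V_n\nearrow\mathcal V$.

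Finally the set identity follows painlessly from the sandwich $\gamma\le \mathcal V_n\le \mathcal V$ and $\mathcal V_n\to\mathcal V$: if $x\in\bigcap_n\mathcal S_n$ then $\mathcal V(x)=\lim_n\mathcal V_n(x)=\gamma(x)$, so $x\in\mathcal S$; conversely if $x\in\mathcal S$ then $\gamma(x)=\mathcal V(x)\ge \mathcal V_n(x)\ge\gamma(x)$ forces $\mathcal V_n(x)=\gamma(x)$ for every $n$, i.e. $x\in\bigcap_n\mathcal S_n$. I expect the main obstacle to be precisely the upper bound $\mathcal V_n\le\mathcal V$: the $\varepsilon$-optimizers of the discrete problems depend on $n$, so the harmonization hypotheses — which are statements about a single fixed stopping time — cannot be applied to them directly, and one must instead exploit the exact matching of discrete and continuous costs along grid-adapted times. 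This matching is what pins down the admissible form of $\ceil{h}^n$ and is the point where the Markovian telescoping of the discrete section is genuinely reused.
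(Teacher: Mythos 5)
Your overall route is the same as the paper's: establish the sandwich $\gamma\le\mathcal V_n\le\mathcal V_{n+1}\le\mathcal V$, get $\lim_n\mathcal V_n\ge\mathcal V$ by playing the roundings $\ceil{\tau}^n$ of an $\epsilon$-optimal $\tau\in\mathcal T$ against the two harmonization hypotheses, and read the set identity off the sandwich. You are in fact more thorough than the paper, which dismisses the whole chain with ``Observe that $\gamma\le\mathcal V_n\le\mathcal V_{n+1}\le\mathcal V$'' and verifies only the inclusion $\bigcap_n\mathcal S_n\subseteq\mathcal S$ explicitly; your diagnosis that the upper bound $\mathcal V_n\le\mathcal V$ is precisely the step that the (asymptotic, single-stopping-time) harmonization hypotheses cannot deliver is correct.

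The gap is in how you fill that step: the ``exact identity'' is false under the summation convention that the paper (and your own display) uses. The discrete objective charges the cost on arrival, i.e. $\sum_{i=1}^{\kappa}\ceil{h}^n(Y^n_i)$ with $\ceil{h}^n(y)=\E_y\left(\int_0^{\tau^n_1}h(X_s)\,ds\right)$. Since $\{i\le\kappa\}\in\mathcal F_{\tau^n_{i-1}}\subseteq\mathcal F_{\tau^n_i}$ and $\E_x\left(\int_{\tau^n_i}^{\tau^n_{i+1}}h(X_s)\,ds\,\Big|\,\mathcal F_{\tau^n_i}\right)=\ceil{h}^n(Y^n_i)$, the strong Markov property turns the $i$-th summand into $\E_x\left(\ind_{\{i\le\kappa\}}\int_{\tau^n_i}^{\tau^n_{i+1}}h(X_s)\,ds\right)$, so the telescoping yields
\begin{align*}
\E_x\left(\sum_{i=1}^{\kappa}\ceil{h}^n(Y^n_i)\right)=\E_x\left(\int_{\tau^n_1}^{\tau^n_{\kappa+1}}h(X_s)\,ds\right),
\end{align*}
which differs from $\E_x\left(\int_0^{\tau^n_\kappa}h(X_s)\,ds\right)$ by the boundary terms $\E_x\left(\int_0^{\tau^n_1}h(X_s)\,ds\right)-\E_x\left(\int_{\tau^n_\kappa}^{\tau^n_{\kappa+1}}h(X_s)\,ds\right)$, which at fixed $n$ neither vanish nor have a definite sign. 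Since both $\mathcal V_n\le\mathcal V$ and $\mathcal V_n\le\mathcal V_{n+1}$ rest in your write-up on this exactness, the chain is not yet proven. The fix is to charge the cost at departure: for $\sum_{i=0}^{\kappa-1}\ceil{h}^n(Y^n_i)$ the same telescoping gives exactly $\E_x\left(\int_0^{\tau^n_\kappa}h(X_s)\,ds\right)$, because $\tau^n_0=0$. Two further caveats worth recording: the theorem only assumes harmonization, an asymptotic condition that does not force $\ceil{h}^n$ to have the natural form you posited, so the chain genuinely needs that form (or the exact identity) as a hypothesis; and the $X$-stopping time $\rho=\tau^n_\kappa$ must be checked to lie in $\mathcal T$, which finiteness of the discrete objective alone does not guarantee. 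Both points are glossed over by the paper as well, whose proof offers no justification of the chain at all, so your attempt founders exactly where the published argument is silent.
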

\begin{proof}
	Observe that $$\gamma \leq\mathcal V_n \leq \mathcal V_{n+1} \leq  \mathcal V $$ for all $n \in \N_0$. Hence $\lim\limits_{n \rightarrow \infty} \mathcal V_n$ exists and $\lim\limits_{n \rightarrow \infty}\mathcal  V_n \leq \mathcal V$. Let $x \in \R$. We only treat the case that $\mathcal V(x)<\infty$, the case $\mathcal V(x)=\infty$ works analogously with the obvious alterations. 
	For each $ x \in \R$ and each $ \epsilon \geq 0$ there is an $X$-stopping time $\tau $ such that $$
	\mathcal V\left(x\right)\leq \E_x\left(\gamma \left(X_\tau\right)-\int\limits_0^{\tau}h\left(Y_s\right)ds\right)+ \frac{\epsilon}{2}
	$$ and an $N \in \N$ such that $$\E_x\left(\gamma \left(X_\tau \right)-\int\limits_0^{\tau}h\left(X_s\right)ds \right) \leq \E_x\left(\gamma \left(X_{\tau^N}\right)-\sum\limits_{i=1}^{\ceil \tau^N}\ceil h^n\left(Y_i\right)\right)+ \frac{\epsilon}{2} $$
	Hence we get \begin{align*}
	\mathcal V\left(x\right)&\leq \E_x\left(\gamma \left(X_\tau\right)-\int\limits_0^{\tau}h\left(X_s\right)ds \right)+ \frac{\epsilon}{2} \\&\leq \E_x\left(\gamma \left(X_{\tau^N}\right)-\sum\limits_{i=1}^{\ceil \tau^N}\ceil h^n\left(Y_i\right) \right)+ \epsilon \\& \leq \mathcal V_N\left(x\right) +\epsilon\\& \leq \lim\limits_{n\rightarrow \infty } \mathcal V_n\left(x\right)+\epsilon .
	\end{align*}	
	Assume $x\in \mathcal S_n$ for all $n\in \N$. Then $\gamma(x)=\lim_{n\rightarrow \infty}\mathcal V_n(x))=\mathcal V(x)$.
\end{proof}

\begin{remark}
	Note that these proofs don't even depend on one dimensionality. However, in the one-dimensional one-sided case these results imply that if the discretized problems are one-sided, the continuous one also is and additionally the threshold of the continuous problem is the monotone limit of the discretized problems' thresholds. 
\end{remark}

Now that we have given conditions under that suitable discrete problems approximate the continuous one in the right way, depending on the process and the functions $h$ and $\gamma$, we have to find suitable discretizations. 

\subsection{Non-random Discretization for L\'evy Processes}
For the sake of simplicity we assume $X$ to be a L\'evy process with $\E_0(|X_1|)<\infty$, nevertheless we want to mention that under the right assumptions the results remain to hold for more general processes.  
One of the simplest imaginable discretizations is  $\left((\tau^n_k)_{k \in \N}\right)_{n\in \N}=\left((\frac{k}{2	n})_{k \in \N}\right)_{n\in \N}.$
We assume $\gamma$ to be differentiable and that there is an $M \in \R$ such that $|\gamma| , |h|<M$. For all $x \in \R$ and all $n \in \N$ we set $$\ceil{h}_n(x):=\E_x\left(\int_{{ 0}}^{\frac{1}{2^n}}h(X_s) ds\right)$$ and for all $X$-stopping times $\tau$ and all $n \in \N$ $$
\ceil \tau^n:= \frac{\lfloor 1+\tau2^{n} \rfloor}{2^n} .
$$
Now $\E_x|X_1|<\infty$ implies $\E_x\left(\sup\limits_{t \leq 1}|X_t|\right) < \infty$, see  \cite{gut}, and hence 
\begin{align*}
\left|\E_x\left( \gamma \left(X_{\tau^n}\right)-\gamma \left(X_{\tau}\right)\right)\right|\leq M\E_x\left|X_\tau-X_{\ceil \tau^n} \right|\leq M \E_x(\sup\limits_{ t \leq \frac{1}{2^n}}|X_t|) \stackrel{n\rightarrow \infty }{\rightarrow} 0 
\end{align*} as well as
\begin{align*}
\left|\E_x\left( \sum\limits_{i=1}^{\ceil \tau^n}\ceil h^n\left(Y_i\right)-\int\limits_0^{ \tau}h\left(Y_s\right)ds\right)\right| \leq M\frac{1}{2^n} \stackrel{n\rightarrow \infty }{\rightarrow} 0. 
\end{align*}
This yields that the non-random discretization is a suitable discretization that harmonizes with $\gamma$ and $h$. We want to remark two things:
First, one can weaken the boundedness assumptions to $\gamma$ and $h$, as shown in \cite{MR1626790} by approximation with bounded functions. 
Second, the trade-off for the relatively strong restrictions on the functions is a pretty high compatibility of discrete and continuous problems. When one assumes concavity of $\gamma$ and monotonicity of $h$, the functions $\ceil{h}^n$ are also monotone, the $Y^n$ are random walks and hence this immediately yields that the $\mathcal S_n$ are one sided intervals as discussed in \ref{random walk}.

\subsection{Spatial Discretization}
Another approach which nicely stresses out the connection of the representing functions $f$ of the discrete and continuous problem is to use a separation of the state space instead of the time axis. 
For each $k,n\in \N$,  define $\tau^n_0:=0$ and $\tau^n_k:=\inf\left\{t\geq \tau^n_{k-1}\Big| X_t \in \R \setminus \left[\frac{\lfloor 2^{n}X_{\tau^n_{k-1}} \rfloor}{2^n},\frac{\lfloor 1+2^{n}X_{\tau^n_{k-1}} \rfloor}{2^n}\right)\right\}$
Again using the obvious choice $$\ceil{h}^n(x):=\E_x\left(\int_{{ 0}}^{\tau_1^n}h(X_s) ds\right)$$ we see that  $\left((\tau^n_k)_{k \in \N}\right)_{n\in \N}$. is a suitable discretization and if we assume $h$ to be continuous, $\gamma$ to be smooth enough to be in the range of the extended generator of $X$ and that for large enough $n$ Dynkin's formula is applicable to $\tau_1^n$, we get 
\begin{align*}
\left|\E_x\left( \gamma \left(X_{\tau^n}\right)-\gamma \left(X_{\tau}\right)\right)\right|=
\E_x\left( \int\limits_{\tau}^{\ceil \tau^n} A \gamma(X_s) ds  \right)\leq  \E_x\left(\E_{X_\tau}\left( \int\limits_{0}^{ \tau^n_1} A \gamma(X_s) ds \right) \right) 
\end{align*} and also
\begin{align*}
\left|\E_x\left( \sum\limits_{i=1}^{\ceil \tau^n}\ceil h^n\left(Y_i\right)-\int\limits_0^{ \tau}h\left(Y_s\right)ds\right)\right| \leq  \E_x\left(\E_{X_\tau}\left( \int\limits_{0}^{ \tau^n_1} h(X_s) ds \right) \right) .
\end{align*}

Now only some boundedness assumptions are needed to ensure that both this terms converge to zero. For example known properties of the stopping region, like boundedness of the continuation region and monotonicity of $\gamma$ and $h$ or if one can justify only to consider bounded stopping times may help here. Instead of going into more detail regarding this, we want to emphasize that with this discretization one can nicely see how the representing functions of the discrete and the continuous time problems are connected. If we denote the representing function of the discrete time problem of stopping $\gamma(Y^n_k)-\sum_{i=1}^{\tau^n_k}\ceil{h}^n(Y^n_i)$ as defined in \eqref{bedingunggleichung} in Section 2 with $f_n$, we see that, if we first assume $h=0$ and for the sake of notational simplicity also $x\in \frac{1}{2^n}\N$, we have \begin{align*}
f_n(x)&=\frac{ \E_x\left(\gamma(Y^n_{\tau^+})\right)-\gamma(x)}{\E(\tau^n_1)}\\
&=\frac{ \E_x\left(\gamma(H_{\hat\tau_{x+\frac{1}{2^n}}})\right)-\gamma(x)}{\E(\tau^n_1)}
\\&\rightarrow A_H \gamma (x),
\end{align*} 
provided $\gamma$ is in the range of the extended generator of $H$. 
To treat the case of arbitrary $h$ we use Lemma \ref{dachfunktion}. With the notations used therein we see that, again for $x\in \frac{1}{2^n}\N$,  \begin{align*}
\frac{\E_x(\ceil h(Y_{\tau^+}^n))}{\E_x(\tau_1^n)}
&=\frac{\E_x(\int_0^{\tau_{x+\frac{1}{2^n}}}h(X_s) d s)}{\E_x(\tau_{x+\frac{1}{2^n}})}
\\&=\frac{\E_x(\int_0^{\hat\tau_{x+\frac{1}{2^n}}}\hat h(H_s) d s)}{\E_x(\hat\tau_{x+\frac{1}{2^n}})}
\\& \rightarrow \hat h(x),
\end{align*}
provided $h$ is smooth enough. 


\newpage

\small \selectfont

\bibliography{Christensen_Sohr_General_Stopping}

\end{document}